\documentclass[a4paper,12pt]{amsart}
\usepackage{amsmath}
\usepackage{amsthm}
\usepackage{amssymb}
\usepackage{amscd}
\usepackage[all]{xy}

\setcounter{secnumdepth}{5}

\textheight 230mm
\textwidth 160mm
\oddsidemargin 0mm
\evensidemargin 0mm
\topmargin 0mm
\newtheorem{thm}{Theorem}[section]
\newtheorem{lemma}[thm]{Lemma}


\newcommand{\proj}{\mathop{\rm Proj}\nolimits}

\DeclareMathOperator{\Hom}{Hom}
\DeclareMathOperator{\Ext}{Ext}

\DeclareMathOperator{\RHom}{RHom}
\DeclareMathOperator{\lotimes}{\otimes^{L}}
\newcommand{\caltor}{\mathop{{\mathcal T\!or}}\nolimits}

\DeclareMathOperator{\End}{End}

\DeclareMathOperator{\rk}{rank}
\DeclareMathOperator{\module}{mod}

\DeclareMathOperator{\Gr}{Gr}

\makeatletter

\@addtoreset{equation}{section}
\makeatother

\title[
Nef vector bundles on a hyperquadric
]{
Nef vector bundles on a hyperquadric
with first Chern class two
}

\thanks{
This work was partially supported by 
JSPS KAKENHI (C) Grant Number 21K03158.
}

\author{Masahiro Ohno
}

\address{Graduate School of Informatics and Engineering,
The University of Electro-Communications,
Chofu-shi,
Tokyo, 182-8585 Japan
}

\email{masahiro-ohno@uec.ac.jp}

\subjclass[2020]{Primary 
14J60;
Secondary 
14J45,
14F08,
}

\keywords{nef vector bundles,
Fano bundles, full strong exceptional collections}

\pagestyle{plain}

%
%
%
%
%

\begin{document}
\begin{abstract}
We classify nef vector bundles on a smooth hyperquadric
of dimension $\geq 4$ 
with first Chern class two 
over an algebraically closed field of characteristic zero.
\end{abstract}

\maketitle


\section{Introduction}
Peternell-Szurek-Wi\'{s}niewski
classified
nef vector bundles on a smooth hyperquadric $\mathbb{Q}^n$ of dimension $n\geq 3$ 
with first Chern class $\leq 1$
over an algebraically closed field $K$ of characteristic zero
in \cite[\S~2 Theorem~2]{pswnef},
and 
we provided
a different proof of this classification
in \cite[Theorem~9.3]{MR4453350}, 
proof which was based on an analysis with a full strong exceptional collection
of vector bundles on $\mathbb{Q}^n$.

This paper is a continuation of 
the previous paper \cite[Theorem 1.1]{QuadricThreefoldc1=2},
in which 
we have classified nef vector bundles on a smooth quadric threefold 
$\mathbb{Q}^3$
with first Chern class two.
In this paper, we classify those on 
a smooth hyperquadric $\mathbb{Q}^n$  of dimension $n\geq 4$.
The precise statement is as follows.

\begin{thm}\label{Chern2OnQ4}
Let $\mathcal{E}$ be a nef vector bundle of rank $r$
on a smooth hyperquadric $\mathbb{Q}^n$ of dimension $n\geq 4$
over an algebraically closed field $K$ 
of characteristic zero,
let $\mathcal{S}$, 
$\mathcal{S}^+$, and $\mathcal{S}^-$ be spinor bundles,
and $\mathcal{S}^{\pm}$ denotes $\mathcal{S}^+$ or $\mathcal{S}^-$.
Suppose that 
$\det\mathcal{E}\cong \mathcal{O}(2)$.
Then 
$\mathcal{E}$ is isomorphic to one of the following vector bundles
or fits in one of the following exact sequences:
\begin{enumerate}
\item[(1)]
$\mathcal{O}(2)\oplus \mathcal{O}^{\oplus r-1}$;
\item[(2)] $\mathcal{O}(1)^{\oplus 2}\oplus\mathcal{O}^{\oplus r-2}$;
\item[(3)] $n=4$, 
$\mathcal{O}(1)\oplus\mathcal{S}^{\pm}
\oplus\mathcal{O}^{\oplus r-3}$;
\item[(4)] $0\to \mathcal{O}(-1)\to \mathcal{O}(1)\oplus 
\mathcal{O}^{\oplus r}
\to \mathcal{E}\to 0$;
\item[(5)] $n=4$, $0\to \mathcal{O}
\to \mathcal{S}^{\pm}\oplus\mathcal{S}^{\pm}
\oplus\mathcal{O}^{\oplus r-3}
\to \mathcal{E}\to 0$;
\item[(6)] $n=5$, 
$0\to \mathcal{O}
\to \mathcal{S}\oplus\mathcal{O}^{\oplus r-3}
\to \mathcal{E}\to 0$;
\item[(7)] $n=6$, 
$0\to \mathcal{O}
\to \mathcal{S}^{\pm}\oplus\mathcal{O}^{\oplus r-3}
\to \mathcal{E}\to 0$;
\item[(8)] $n=4$, 
$0\to \mathcal{S}^{\pm}(-1)\oplus\mathcal{O}(-1)
\to \mathcal{O}^{\oplus r+3}
\to \mathcal{E}\to 0$;
\item[(9)] $0\to \mathcal{O}(-1)^{\oplus 2}\to 
\mathcal{O}^{\oplus r+2}
\to \mathcal{E}\to 0$;
\item[(10)] $0\to \mathcal{O}(-2)\to \mathcal{O}^{\oplus r+1}
\to \mathcal{E}\to 0$.
\end{enumerate}
\end{thm}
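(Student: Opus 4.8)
The plan is to follow the strategy of \cite{MR4453350} and \cite{QuadricThreefoldc1=2}: resolve $\mathcal{E}$ by the Beilinson-type spectral sequence attached to Kapranov's full strong exceptional collection on $\mathbb{Q}^{n}$ -- whose members are the line bundles $\mathcal{O},\mathcal{O}(1),\dots,\mathcal{O}(n-1)$ together with suitable twists of the spinor bundle(s) -- and then to recover $\mathcal{E}$ from the shape of the resulting complex. Nefness enters only through vanishing theorems and through the positivity of the splitting type on lines. Concretely, since $\mathcal{E}$ is nef, the vanishing theorem for nef bundles on $\mathbb{Q}^{n}$ recorded in \cite{MR4453350} gives $H^{q}(\mathbb{Q}^{n},\mathcal{E}(t))=0$ for $q\geq1$ and $t\geq q-n$. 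The remaining top-degree groups $H^{n}(\mathcal{E}(t))$ for the relevant twists $t$ are killed by the elementary remark that $\mathcal{E}^{\vee}(s)$ has no nonzero global sections for $s\leq-1$: on every line $\ell\subset\mathbb{Q}^{n}$ one has $\mathcal{E}|_{\ell}\cong\bigoplus_{i=1}^{r}\mathcal{O}(a_{i})$ with $a_{i}\geq0$, so $\mathcal{E}^{\vee}(s)|_{\ell}$ has all summands negative and a section vanishing on every line is zero; Serre duality then yields the desired vanishing. Tensoring $\mathcal{E}$ with the defining exact sequences of the spinor bundles propagates all of this to the groups $H^{q}(\mathcal{E}\otimes\mathcal{S}^{\pm}(t))$ feeding the spectral sequence.

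The upshot is that the $E_{1}$-page collapses to a short window, so that $\mathcal{E}$ is the cohomology sheaf of a monad
\[
C^{-1}\xrightarrow{\ a\ }C^{0}\xrightarrow{\ b\ }C^{1}
\]
whose terms are explicit finite direct sums of members of the exceptional collection, with one of $C^{-1}$, $C^{1}$ possibly zero. To pin down these terms, note that on a line $\ell$ the bundle $\mathcal{E}|_{\ell}$ is either $\mathcal{O}(2)\oplus\mathcal{O}^{\oplus r-1}$ or $\mathcal{O}(1)^{\oplus2}\oplus\mathcal{O}^{\oplus r-2}$, since $\sum a_{i}=c_{1}(\mathcal{E})\cdot\ell=2$; combining this with $\det\mathcal{E}\cong\mathcal{O}(2)$, $\rk\mathcal{E}=r$, and the cohomology dimensions from the previous step bounds the Chern characters of the $C^{j}$, leaving only finitely many numerically admissible triples $(C^{-1},C^{0},C^{1})$, all with line-bundle twists in a short range and spinor twists of the form $\mathcal{S}^{\pm}(j)$ with small $j$.

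For each surviving triple one then examines the monad. When $C^{-1}=C^{1}=0$ one gets the split bundles (1)--(3); when $C^{1}=0$ but $C^{-1}\neq0$, $\mathcal{E}$ is the cokernel of an injection of bundles, and matching ranks and Chern classes identifies it with one of (4)--(10). Cases in which $\mathcal{E}$ acquires a line-bundle sub- or quotient bundle are dispatched by splitting it off and invoking the classification of nef bundles with first Chern class $\leq1$ from \cite{pswnef}, \cite{MR4453350}; this supplies (1), (2) and parts of (3), (4), (9), (10).

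Two points remain, and I expect the second to be the main obstacle. First, for each surviving monad one must check that the differentials $a$, $b$ have constant rank, so that the cohomology sheaf is a vector bundle of the predicted rank, and that this bundle is genuinely nef and not merely of nef determinant; the candidates failing either test are discarded. Second, the entries of $C^{j}$ built from the spinor bundle contribute summands $\mathcal{S}^{\pm}(j)$ of rank $2^{\lceil n/2\rceil-1}$, and the rank-and-Chern-class bookkeeping then forces $n\in\{4,5,6\}$ in precisely the cases (3), (5), (6), (7), (8); alternatively, for $n\geq7$ one restricts $\mathcal{E}$ to a general hyperplane section $\mathbb{Q}^{n-1}$ and applies the inductive hypothesis -- with \cite{QuadricThreefoldc1=2} as the base case $n=3$, after recording how spinor bundles restrict to hyperplane sections -- to conclude that no spinor-bundle case survives, leaving only (1), (2), (4), (9), (10). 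The naive numerology by itself leaves several spurious candidates among the spinor-bundle monads, and ruling them out while proving the sharp bounds on $n$ requires a finer geometric analysis (degeneracy loci of the monad map, restriction to lines and $2$-planes, and semistability considerations), whereas the cohomological bookkeeping of the earlier steps, though lengthy, is essentially mechanical given \cite{MR4453350}.
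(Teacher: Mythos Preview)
Your overall framework---the Bondal spectral sequence attached to Kapranov's exceptional collection---is exactly what the paper uses, but the organizing principle is different in a way that matters. The paper does \emph{not} attempt to enumerate numerically admissible monads directly on $\mathbb{Q}^{n}$. Instead, restriction to a hyperplane section is the primary device from the outset: one takes the classification on $\mathbb{Q}^{3}$ (Theorem~\ref{Chern2OnQ3}) as input, and the entire proof is a case analysis according to which of its nine cases $\mathcal{E}|_{\mathbb{Q}^{3}}$ falls into. The reason is that your vanishing theorems and splitting-type considerations alone do not determine enough of the groups $h^{q}(\mathcal{E}(t))$ and $h^{q}(\mathcal{S}^{(\pm)\vee}\otimes\mathcal{E}(t))$ to pin down the $E_{2}$-page; the paper computes these groups by climbing the tower of restriction sequences $0\to\mathcal{E}(t-1)\to\mathcal{E}(t)\to\mathcal{E}(t)|_{\mathbb{Q}^{n-1}}\to 0$ and their spinor-twisted analogues, with the $\mathbb{Q}^{3}$ classification supplying the boundary values. (In several cases the spectral sequence is applied not to $\mathcal{E}$ but to $\mathcal{E}(-1)$ or to a torsion-free quotient $\mathcal{F}$, precisely because that is the twist for which the restriction data make the page computable.) Your proposal to bound the $C^{j}$ by Chern-character arithmetic is therefore where the gap lies: without the restriction input, the bookkeeping does not close.

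For the sharp bounds on $n$ in the spinor cases, the mechanism is not degeneracy loci or semistability but a short parity argument. Ottaviani's sequence $0\to\mathcal{S}^{(\pm)\vee}\to\mathcal{O}^{\oplus\alpha}\to\mathcal{S}^{(\mp)\vee}(1)\to 0$, twisted by $\mathcal{E}(-2)$, yields $h^{q}(\mathcal{S}^{(\mp)\vee}\otimes\mathcal{E}(-1))=h^{q+1}(\mathcal{S}^{(\pm)\vee}\otimes\mathcal{E}(-2))$; combined with the hyperplane restriction sequence, this forces (for the relevant $q$) $h^{q}(\mathcal{S}^{\vee}\otimes\mathcal{E}(-1))=h^{q+1}(\mathcal{S}^{\vee}\otimes\mathcal{E}(-2))$ when $n$ is odd, while the restriction to $\mathbb{Q}^{n-1}$ contributes an \emph{odd}-dimensional piece in the middle of the resulting short exact sequence---a contradiction. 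This is how Cases~(3) and~(8) die at $n=5$ and Cases~(5)--(7) die at $n=7$; your inductive alternative for $n\geq 7$ is on the right track, but the paper runs this argument already from $n=5$ and it is what actually does the work.
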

Note that this list is effective: in each case exits an example.
Theorem~\ref{Chern2OnQ4} answers the question posed in 
\cite[Question 1]{MR3275418}.
Note that Case (8) in Theorem~\ref{Chern2OnQ4}
was missing in \cite[Example 1]{MR3275418}.
Note also that the projectivization $\mathbb{P}(\mathcal{E})$
of the bundle $\mathcal{E}$ in Theorem~\ref{Chern2OnQ4}
is a Fano manifold of dimension $n+r-1$, i.e., the bundle $\mathcal{E}$
in Theorem~\ref{Chern2OnQ4} is a Fano bundle  on $\mathbb{Q}^n$ of rank $r$.

Our basic strategy and framework for  describing  
$\mathcal{E}$
in Theorem~\ref{Chern2OnQ4} 
is to provide a minimal locally free resolution 
of $\mathcal{E}$
in terms of some twists of the full strong exceptional collection 
\[(\mathcal{O},\mathcal{S},\mathcal{O}(1),\mathcal{O}(2),\dots,\mathcal{O}(n-1))\] 
of vector bundles, if $n$ is odd; collection
\[(\mathcal{O},\mathcal{S}^+,\mathcal{S}^-,\mathcal{O}(1),\mathcal{O}(2),\dots,\mathcal{O}(n-1))\]
of vector bundles, if $n$ is even 
(see \cite{MR4453350} for more details).

The content of this paper is as follows.
In Section~\ref{Preliminaries}, we briefly recall 
Bondal's theorem \cite[Theorem 6.2]{MR992977}
and its related notions
and results
used in the proof of Theorem~\ref{Chern2OnQ4}.
In particular, we recall some finite-dimensional algebra $A$,
and fix symbols, 
e.g., $G$ and $S_i$,  
related to 
$A$ and 
finitely generated right $A$-modules. 
We also recall the classifcation
\cite[Theorem 1.1]{QuadricThreefoldc1=2}
of nef vector bundles
on a smooth quadric threefold $\mathbb{Q}^3$ with Chern class two
in Theorem~\ref{Chern2OnQ3}.
In Section~\ref{HRR}, we state Hirzebruch-Riemann-Roch formulas
for vector bundles $\mathcal{E}$ on $\mathbb{Q}^4$ with $c_1=2$.
In Section~\ref{Set-up for the case $n=4$}, 
we provide the set-up for the proof of Theorem~\ref{Chern2OnQ4}.
The proof of Theorem~\ref{Chern2OnQ4} is carried out 
in Sections~\ref{Case(1)OfTheoremChern2OnQ3} to 
\ref{Case(9)OfTheoremChern2OnQ3}
according to which case of Theorem~\ref{Chern2OnQ3} $\mathcal{E}|_{\mathbb{Q}^3}$ belongs to.

\subsection{Notation and conventions}\label{convention}
Throughout this paper,
we work over an algebraically closed field $K$ of characteristic zero.
Basically we follow the standard notation and terminology in algebraic
geometry. 
We denote 
by $\mathbb{Q}^n$ a smooth hyperquadric of dimension $n$ over $K$,
and by 
\begin{itemize}
\item $\mathcal{S}$, $\mathcal{S}^+$, $\mathcal{S}^-$ spinor bundles on $\mathbb{Q}^n$.
\end{itemize}
Note that we follow Kapranov's convention~\cite[p.\ 499]{MR0939472};
our spinor bundles $\mathcal{S}$, $\mathcal{S}^+$, $\mathcal{S}^-$ are globally generated,
and they are the duals of those of Ottaviani's~\cite{ot}.
For a coherent sheaf $\mathcal{F}$,
we denote by $c_i(\mathcal{F})$ the $i$-th Chern class of $\mathcal{F}$
and 
by $\mathcal{F}^{\vee}$ the dual of $\mathcal{F}$.
In particular, 
\begin{itemize}
\item $c_i$ stands for $c_i(\mathcal{E})$
of the nef vector bundle $\mathcal{E}$ we are dealing with.
\end{itemize}
For a vector bundle $\mathcal{E}$,
$\mathbb{P}(\mathcal{E})$ denotes $\proj S(\mathcal{E})$,
where $S(\mathcal{E})$ denotes the symmetric algebra of $\mathcal{E}$.
The tautological line bundle 
\begin{itemize}
\item $\mathcal{O}_{\mathbb{P}(\mathcal{E})}(1)$
is also denoted by $H(\mathcal{E})$.
\end{itemize}
Let $A^*\mathbb{Q}^4$ be the Chow ring of $\mathbb{Q}^4$.
We denote 
\begin{itemize}
\item by $h$ the hyperplane class in $A^1\mathbb{Q}^4$: $A^1\mathbb{Q}^4=\mathbb{Z}h$.
\end{itemize}
Finally we refer to \cite{MR2095472} for the definition
and basic properties of nef vector bundles.

\section{Preliminaries}\label{Preliminaries}
Throughout this paper, 
$G_0$, $G_1$, $G_2$, $G_3\dots,G_m$ denote respectively 
\[\mathcal{O}, \mathcal{S}, \mathcal{O}(1), \mathcal{O}(2),\dots,\mathcal{O}(n-1)\]
on $\mathbb{Q}^n$ if $n$ is odd, where $m=n$,
and 
\[\mathcal{O}, \mathcal{S}^+, \mathcal{S}^-, \mathcal{O}(1),\dots,\mathcal{O}(n-1)\]
on $\mathbb{Q}^n$ if $n$ is even, where $m=n+1$.
An important and  well known fact~\cite[Theorem~4.10]{MR0939472} of the 
collection $(G_0,\dots,G_m)$
is that it is a full strong exceptional collection in 
$D^b(\mathbb{Q}^n)$,
where $D^b(\mathbb{Q}^n)$ denotes the bounded derived category
of (the abelian category of) coherent sheaves on $\mathbb{Q}^n$.
An exceptional collection is also called an exceptional sequence. 
We refer to \cite{MR2244106} for the definition of a full strong exceptional sequence.

Denote by $G$ the direct sum $\bigoplus_{i=0}^mG_i$ of $G_0,\dots, G_m$,
and by $A$ the endomorphism ring $\End(G)$ of $G$.
Then $A$ is a finite-dimensional $K$-algebra,
and $G$ is a left $A$-module.
Denote by $\module A$ the category of finitely generated right $A$-modules,
and by $D^b(\module A)$ the bounded derived category
of $\module A$.
Let $p_i:G\to G_i$ be the projection,
and $\iota_i:G_i\hookrightarrow G$ the inclusion.
Set $e_i=\iota_i\circ p_i$. Then  $e_i\in A$.
Set $P_i=e_iA$.
Then $A\cong \oplus_i P_i$ as right $A$-modules,
and hence $P_i$ is projective.
We have $P_i\otimes_A G\cong G_i$.
Any finitely generated right $A$-module $V$
has an ascending filtration 
\[0=V^{\leq -1}\subset V^{\leq 0}\subset V^{\leq 1}\subset 
\dots \subset V^{\leq m}=V\]
by right $A$-submodules,
where $V^{\leq i}=\bigoplus_{j\leq i}Ve_j$.
Set $\Gr^iV=V^{\leq i}/V^{\leq i-1}$
and $S_i=\Gr^iP_i$.
Then $S_i$ is a simple right $A$-module.
If we set $m_i=\dim_K \Gr^iV$, then 
$\Gr^iV\cong S_i^{\oplus m_i}$.
For a coherent sheaf $\mathcal{F}$ 
on $\mathbb{Q}^n$,
$\Ext^q(G, \mathcal{F})$ is a finitely generated 
right $A$-module.

It follows from Bondal's theorem \cite[Theorem 6.2]{MR992977}
that 
\[\RHom(G,\bullet):D^b(\mathbb{Q}^n)\to D^b (\module A)\]
is an exact equivalence,
and its quasi-inverse is 
\[\bullet\lotimes_AG:D^b(\module A)\to D^b(\mathbb{Q}^n).\]
For a coherent sheaf $\mathcal{F}$ on $\mathbb{Q}^n$, this fact can be rephrased 
in terms of a spectral sequence\cite[Theorem 1]{MR3275418}:
\begin{equation}\label{BondalSpectral}
E_2^{p,q}=\caltor_{-p}^A(\Ext^q(G,\mathcal{F}),G)
\Rightarrow
E^{p+q}=
\begin{cases}
\mathcal{F}& \textrm{if}\quad  p+q= 0\\
0& \textrm{if}\quad  p+q\neq 0,
\end{cases}
\end{equation}
which we call 
the Bondal spectral sequence.
Note that $E_2^{p,q}$ is the $p$-th cohomology sheaf 
$\mathcal{H}^p(\Ext^q(G,\mathcal{F})\lotimes_AG)$ of the complex 
$\Ext^q(G,\mathcal{F})\lotimes_AG$.
When we compute the spectral sequence, we consider the descending filtration
on the right $A$-module $\Ext^q(G,\mathcal{F})$, and 
consider distinguished triangles consisting of 
the pull-backs  $\Gr^i\Ext^q(G,\mathcal{F})\lotimes_AG$ of the graded pieces 
$\Gr^i\Ext^q(G,\mathcal{F})$.
Thus it is important to know 
what the pull-back $S_i\lotimes_AG$ of the component $S_i$ of the graded piece 
$\Gr^i\Ext^q(G,\mathcal{F})$ is.

Here we recall the bundles $\Psi_i$ on $\mathbb{Q}^n$,
which is defined by Kapranov in \cite[\S 4]{MR0939472},
and which is characterized by Ancona and Ottaviani in \cite[\S 5]{MR1092580}
by the non-split exact sequences
\[0\to \Omega_{\mathbb{P}^{n+1}}^i(i)|_{\mathbb{Q}^n}
\to \Psi_i\to \Psi_{i-2}\to 0\]
for $i\geq 2$ with $\Psi_0=\mathcal{O}$ and $\Psi_1=\Omega_{\mathbb{P}^{n+1}}^1(1)|_{\mathbb{Q}^n}$.
Set 
\[\widetilde{\Psi}_i
=\Psi_{n+1-i}^{\vee}\otimes \Omega_{\mathbb{P}^{n+1}}^{n+1}(n+1)|_{\mathbb{Q}^n}.\]
Then 
$\widetilde{\Psi}_{n+1}
=\mathcal{O}(-1)$,
$\widetilde{\Psi}_{n}
=T_{\mathbb{P}^{n+1}}(-2)|_{\mathbb{Q}^n}
\cong \Omega_{\mathbb{P}^{n+1}}^{n}(n)|_{\mathbb{Q}^n}$,
and, 
for $2\leq i\leq n+1$,
$\widetilde{\Psi}_{i}$ fits in the following non-split exact sequence:
\begin{equation}\label{PsiTildeExactSeq}
0\to \widetilde{\Psi}_{i}\to \widetilde{\Psi}_{i-2}\to 
\Omega_{\mathbb{P}^{n+1}}^{i-2}(i-2)|_{\mathbb{Q}^n}
\to 0.
\end{equation}
First we note 
the following 
\begin{lemma}\label{SpinorVanishOmegaP}
Let $\mathcal{S}^{(\pm)}$ denotes $\mathcal{S}^+$ or $\mathcal{S}^-$ if $n$ is even
and $\mathcal{S}$ if $n$ is odd. Then 
\begin{enumerate}
\item $\RHom(\mathcal{O}(k), \mathcal{S}^{(\pm)}(-1))=0$
for $0\leq k\leq n-1$.
\item 
$\RHom(\mathcal{S}^{(\pm)},
\widetilde{\Psi}_p
)=0$
 for $3\leq p\leq n+1$.
\end{enumerate}
\end{lemma}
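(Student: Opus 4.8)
The plan is to reduce both statements to the exceptionality of the collection $(G_0,\dots,G_m)$ by repeated use of Serre duality on $\mathbb{Q}^n$, together with the defining sequences of the $\Psi_i$ and the Koszul resolution attached to the Euler sequence on $\mathbb{P}^{n+1}$. For (1), I note $\RHom(\mathcal{O}(k),\mathcal{S}^{(\pm)}(-1))\cong\Rgamma(\mathbb{Q}^n,\mathcal{S}^{(\pm)}(-1-k))$, so (1) amounts to $\Rgamma(\mathbb{Q}^n,\mathcal{S}^{(\pm)}(t))=0$ for $-n\le t\le-1$. For $-n+1\le t\le-1$ this group is $\RHom(\mathcal{O}(-t),\mathcal{S}^{(\pm)})$, which vanishes since $\mathcal{O}(-t)$ (with $1\le-t\le n-1$) sits strictly to the right of $\mathcal{S}^{(\pm)}$ in $(G_0,\dots,G_m)$. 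For the boundary value $t=-n$, Serre duality identifies $\Rgamma(\mathbb{Q}^n,\mathcal{S}^{(\pm)}(-n))=\RHom(\mathcal{O}(n),\mathcal{S}^{(\pm)})$, up to a shift, with the dual of $\RHom(\mathcal{S}^{(\pm)},\mathcal{O})=\Rgamma(\mathbb{Q}^n,(\mathcal{S}^{(\pm)})^{\vee})$; since $(\mathcal{S}^{(\pm)})^{\vee}(1)$ is again a spinor bundle, hence $G_1$ (or one of $G_1,G_2$), the latter equals $\RHom(\mathcal{O}(1),(\mathcal{S}^{(\pm)})^{\vee}(1))=0$, again by exceptionality. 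One could also simply invoke the known cohomology of twisted spinor bundles.

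For (2), I would first move to a situation governed by (1). Since $\Omega^{n+1}_{\mathbb{P}^{n+1}}(n+1)=\mathcal{O}_{\mathbb{P}^{n+1}}(-1)$ we have $\widetilde{\Psi}_p=\Psi_{n+1-p}^{\vee}(-1)$ on $\mathbb{Q}^n$, so Serre duality gives, up to dualizing and shifting, $\RHom(\mathcal{S}^{(\pm)},\widetilde{\Psi}_p)\cong\Rgamma(\mathbb{Q}^n,\Psi_{n+1-p}\otimes\mathcal{S}^{(\pm)}(1-n))$. Writing $q=n+1-p$, the range $3\le p\le n+1$ is $0\le q\le n-2$, and it suffices to prove $\Rgamma(\mathbb{Q}^n,\Psi_q\otimes\mathcal{S}^{(\pm)}(1-n))=0$ for $0\le q\le n-2$.

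This I would prove by induction on $q$ using the sequences $0\to\Omega^q_{\mathbb{P}^{n+1}}(q)|_{\mathbb{Q}^n}\to\Psi_q\to\Psi_{q-2}\to0$ (with $\Psi_0=\mathcal{O}$, $\Psi_{-1}=0$). After tensoring with $\mathcal{S}^{(\pm)}(1-n)$, the $\Psi_{q-2}$-term has vanishing cohomology by induction, so it remains to handle $\Omega^q_{\mathbb{P}^{n+1}}(q)|_{\mathbb{Q}^n}\otimes\mathcal{S}^{(\pm)}(1-n)$. For this I would use the exact complex $0\to\Omega^q_{\mathbb{P}^{n+1}}(q)\to\wedge^q V\otimes\mathcal{O}\to\wedge^{q-1}V\otimes\mathcal{O}(1)\to\cdots\to\mathcal{O}(q)\to0$ obtained from the exterior powers of the Euler sequence (here $V=H^0(\mathbb{P}^{n+1},\mathcal{O}(1))^{\vee}$); restricting to $\mathbb{Q}^n$ and tensoring with the vector bundle $\mathcal{S}^{(\pm)}(1-n)$ preserves exactness, and the terms become direct sums of copies of $\mathcal{S}^{(\pm)}(1-n+l)$ with $0\le l\le q\le n-2$. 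Since then $1-n+l\in[1-n,-1]\subseteq[-n,-1]$, part (1) forces $\Rgamma(\mathbb{Q}^n,\mathcal{S}^{(\pm)}(1-n+l))=0$ for all such $l$, hence $\Rgamma(\mathbb{Q}^n,\Omega^q_{\mathbb{P}^{n+1}}(q)|_{\mathbb{Q}^n}\otimes\mathcal{S}^{(\pm)}(1-n))=0$, closing the induction.

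The delicate point is not any single hard estimate but the bookkeeping of twists: the argument works because, for the extreme value $q=n-2$, the Koszul resolution of $\Omega^{n-2}_{\mathbb{P}^{n+1}}(n-2)$ contributes exactly the twists $\mathcal{O}(1-n),\dots,\mathcal{O}(-1)$, all inside the window $[-n,-1]$ covered by (1); shifting this window by one would break the argument, which is consistent with the fact that the analogous vanishing fails for $p=2$. Thus the steps requiring care are (i) the identification $\widetilde{\Psi}_p=\Psi_{n+1-p}^{\vee}(-1)$ together with the Serre-duality rewriting, and (ii) checking that every twist of $\mathcal{S}^{(\pm)}$ arising along the way stays within $[-n,-1]$; the cohomological input itself (the twisted spinor-bundle vanishing in (1), in particular the boundary case $t=-n$) is standard.
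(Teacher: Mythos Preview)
Your proof is correct and parallels the paper's closely, but packages part (2) dually. For (1), the paper simply cites Kapranov, whereas you rederive it from exceptionality plus Serre duality for the boundary twist $t=-n$; both are fine. For (2), the paper stays on the $\widetilde{\Psi}_p$ side: it uses the self-duality $\mathcal{S}^{(\pm)\vee}\cong\mathcal{S}^{(\pm')}(-1)$ to rewrite $\RHom(\mathcal{S}^{(\pm)},\mathcal{O}(-k))$ as $\Rgamma(\mathcal{S}^{(\pm')}(-1-k))$, then resolves $\Omega_{\mathbb{P}^{n+1}}^p(p)|_{\mathbb{Q}^n}\cong\wedge^{n+1-p}(T_{\mathbb{P}^{n+1}}(-1)|_{\mathbb{Q}^n})(-1)$ by negative line-bundle twists $\mathcal{O}(-1),\dots,\mathcal{O}(-(n-1))$, and finishes by descending induction along the sequence $0\to\widetilde{\Psi}_i\to\widetilde{\Psi}_{i-2}\to\Omega^{i-2}(i-2)|_{\mathbb{Q}^n}\to 0$ from the base cases $\widetilde{\Psi}_{n+1}=\mathcal{O}(-1)$ and $\widetilde{\Psi}_n$. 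You instead apply Serre duality once to pass to $\Rgamma(\Psi_q\otimes\mathcal{S}^{(\pm)}(1-n))$, resolve $\Omega^q(q)$ by nonnegative twists, and induct upward on $q$ via the $\Psi_q$ sequences. These are mirror images: your Serre-duality step plays the same role as the paper's use of $\mathcal{S}^{(\pm)\vee}\cong\mathcal{S}^{(\pm')}(-1)$, and the twist window $[1-n,-1]$ you track corresponds exactly to the paper's range $0\le k\le n-2$.
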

\begin{proof}
(1) This follows from \cite[Proposition 4.11]{MR0939472}.

(2) Recall that
$\mathcal{S}^{(\pm)\vee}\cong \mathcal{S}^{(\pm)}(-1)$ 
(see, e.g.,
\cite[Theorem 8.1 (9), (10), (11)]{MR4453350}).
Hence we have 
$\RHom(\mathcal{S}^{(\pm)},\mathcal{O}(-k))=0$ for $0\leq k\leq n-1$
by (1).
In particular, $\RHom(\mathcal{S}^{(\pm)},\widetilde{\Psi}_{n+1})=0$.
Set $V=H^0(\mathcal{O}_{\mathbb{P}^{n+1}}(1))$.
Now 
$\Omega_{\mathbb{P}^{n+1}}^{p}(p)|_{\mathbb{Q}^n}$
is isomorphic to 
$\wedge^{n+1-p}(T_{\mathbb{P}^{n+1}}(-1)|_{\mathbb{Q}^n})(-1)$,
and we have the following exact sequence: 
\[0\to \mathcal{O}(-1)\to \mathcal{O}\otimes V^{\vee}\to 
T_{\mathbb{P}^{n+1}}(-1)|_{\mathbb{Q}^n}\to 0.\]
This exact sequence induces the following exact sequence:
\[0\to \wedge^{k-1}(T_{\mathbb{P}^{n+1}}(-1)|_{\mathbb{Q}^n})(-1)
\to \mathcal{O}\otimes \wedge^kV^{\vee}\to 
\wedge^{k}(T_{\mathbb{P}^{n+1}}(-1)|_{\mathbb{Q}^n})\to 0.\]
Hence 
$\wedge^{k}(T_{\mathbb{P}^{n+1}}(-1)|_{\mathbb{Q}^n})(-1)$
has the following resolution:
\[0\to \mathcal{O}(-k-1)\to \mathcal{O}(-k)\otimes V^{\vee}\to \dots
\to \mathcal{O}(-1)\otimes \wedge^{k}V^{\vee}\to 
\wedge^{k}(T_{\mathbb{P}^{n+1}}(-1)|_{\mathbb{Q}^n})(-1)\to 0.\]
Therefore $\RHom(\mathcal{S}^{(\pm)},\wedge^{k}(T_{\mathbb{P}^{n+1}}(-1)|_{\mathbb{Q}^n})(-1))=0$ 
for $0\leq k\leq n-2$. 
Hence we see that $\RHom(\mathcal{S}^{(\pm)},\widetilde{\Psi}_{n})=0$
and that $\RHom(\mathcal{S}^{(\pm)},\Omega_{\mathbb{P}^{n+1}}^p(p)|_{\mathbb{Q}^n})=0$
for $3\leq p\leq n+1$.
Now the exact sequence \eqref{PsiTildeExactSeq} inductively shows  that 
$\RHom(\mathcal{S}^{(\pm)},\widetilde{\Psi}_p)=0$
for $3\leq p\leq n+1$.
\end{proof}
Now we can answer the question that what $S_i\lotimes_AG$ is.
\begin{lemma}\label{S2Arilemma}
\begin{enumerate}
\item[(1)]
If $n$ is odd,
we have 
\begin{align}
S_{p-1}\lotimes_AG&\cong \widetilde{\Psi}_{p}[p-1] 
\textrm{ for } n+1\geq p\geq 3; \label{SiOdd}
\\ 
S_1\lotimes_AG&
\cong \mathcal{S}(-1)[1];
\label{S1Sdual}\\
S_0\lotimes_AG&\cong \mathcal{O}.\label{S0OOdd}
\end{align}
\item[(2)] If $n$ is even, we have 
\begin{align}
S_{p}\lotimes_AG&\cong \widetilde{\Psi}_p[p-1] 
\textrm{ for } n+1\geq p\geq 3; \label{SiEven}
\\ 
S_2\lotimes_AG&\cong \mathcal{S}^{+}(-1)[1];
\label{S2S+Even}
\\
S_1\lotimes_AG&
\cong \mathcal{S}^{-}(-1)[1];
\label{S1S-Even}\\
S_0\lotimes_AG&\cong \mathcal{O}.\label{S0OEven}
\end{align}
\end{enumerate}
\end{lemma}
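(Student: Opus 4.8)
The plan is to compute $S_i \lotimes_A G$ by exploiting the fact that $\bullet \lotimes_A G$ is the quasi-inverse of the equivalence $\RHom(G, \bullet)$, so that $P_i \lotimes_A G \cong G_i$ and, more importantly, the projective resolutions of the simple modules $S_i$ transport to exact triangles among the $G_j$ (or rather among well-chosen auxiliary bundles). Concretely, I would first pin down $S_0 \lotimes_A G$: since $G_0 = \mathcal{O}$ and $S_0 = \Gr^0 P_0 = P_0$ (because $P_0 = e_0 A$ already sits in filtration degree $0$, as $\RHom(\mathcal{O}, G_j) = \Ext^0$ is concentrated in degree $0$ for $j \geq 0$ by the exceptional collection being strong and the ordering), we get $S_0 \lotimes_A G \cong \mathcal{O}$ directly, giving \eqref{S0OOdd} and \eqref{S0OEven}. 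For the remaining $S_i$, the idea is to identify the bundle $\widetilde{\Psi}_p$ as the image of $S_{p-1}$ (resp.\ $S_p$) and to proceed by descending induction on $p$ using the exact sequence \eqref{PsiTildeExactSeq}.

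The key input is Kapranov's description of the resolution of $\mathcal{O}$ (or the structure sheaf of the diagonal) in terms of the collection $(G_0, \dots, G_m)$ and the bundles $\Psi_i$, $\widetilde{\Psi}_i$. Specifically, the Beilinson-type resolution on $\mathbb{Q}^n$ expresses the kernel/cokernel bundles of the canonical resolution precisely as the $\widetilde{\Psi}_i$, and the right $A$-module structure on $\Ext^\bullet(G, \mathcal{O}_\Delta)$ translates this into the statement that $S_{p-1} \lotimes_A G$ (resp.\ $S_p \lotimes_A G$) is a shift of $\widetilde{\Psi}_p$. To make this precise I would apply $\RHom(G, \bullet)$ to the exact sequence \eqref{PsiTildeExactSeq}: using Lemma~\ref{SpinorVanishOmegaP} together with the standard vanishing $\RHom(\mathcal{O}(k), \Omega^{i-2}_{\mathbb{P}^{n+1}}(i-2)|_{\mathbb{Q}^n})$ in the appropriate range of $k$, one checks that $\RHom(G, \widetilde{\Psi}_p)$ is a simple module placed in a single cohomological degree, namely $S_{p-1}[-(p-1)]$ (resp.\ $S_p[-(p-1)]$); applying the quasi-inverse $\bullet \lotimes_A G$ then yields \eqref{SiOdd} and \eqref{SiEven}. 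The base cases $\widetilde{\Psi}_{n+1} = \mathcal{O}(-1)$ and $\widetilde{\Psi}_n \cong \Omega^n_{\mathbb{P}^{n+1}}(n)|_{\mathbb{Q}^n}$ are handled by the resolutions already written down in the proof of Lemma~\ref{SpinorVanishOmegaP}, which show $\RHom(G, \widetilde{\Psi}_{n+1})$ and $\RHom(G, \widetilde{\Psi}_n)$ are simple modules in the expected degrees.

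Finally, for the spinor-bundle summands I would treat $S_1$ (odd case) and $S_2, S_1$ (even case) by the same transport-of-structure principle: by Lemma~\ref{SpinorVanishOmegaP}(1), $\RHom(\mathcal{O}(k), \mathcal{S}^{(\pm)}(-1)) = 0$ for $0 \leq k \leq n-1$, and by Lemma~\ref{SpinorVanishOmegaP}(2), $\RHom(\mathcal{S}^{(\pm)}, \widetilde{\Psi}_p) = 0$ for $p \geq 3$, so $\RHom(G, \mathcal{S}^{(\pm)}(-1))$ is supported only on the spinor idempotent(s) and, by computing $\Ext^\bullet(\mathcal{S}^{(\pm)}, \mathcal{S}^{(\pm)}(-1)) = \RHom(\mathcal{S}^{(\pm)}, \mathcal{S}^{(\pm)\vee}) = K$ concentrated in degree $1$ after the twist (using $\mathcal{S}^{(\pm)\vee} \cong \mathcal{S}^{(\pm)}(-1)$ and the cohomology of $\mathcal{S}^{(\pm)} \otimes \mathcal{S}^{(\pm)}$), one sees $\RHom(G, \mathcal{S}^{(\pm)}(-1))$ is the simple module $S_1[-1]$ (resp.\ $S_2[-1]$ or $S_1[-1]$); applying $\bullet \lotimes_A G$ gives \eqref{S1Sdual}, \eqref{S2S+Even}, \eqref{S1S-Even}. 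The main obstacle I anticipate is bookkeeping the precise cohomological degree and the precise simple module (which idempotent $e_i$ the module $\RHom(G, \widetilde{\Psi}_p)$ lives over), i.e.\ matching the index shift $p \mapsto p-1$ in the odd case versus $p \mapsto p$ in the even case; this is forced by the position of the spinor bundle(s) in the collection, and is best verified by checking $\Ext^\bullet(G_j, \widetilde{\Psi}_p)$ for each $j$ using \eqref{PsiTildeExactSeq} and the known cohomology of $\Omega^\bullet_{\mathbb{P}^{n+1}}(\bullet)|_{\mathbb{Q}^n}$ and of the spinor bundles.
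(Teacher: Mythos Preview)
Your proposal is correct and follows the same overall strategy as the paper: show that $\RHom(G,\widetilde{\Psi}_p)$ and $\RHom(G,\mathcal{S}^{(\pm)}(-1))$ are simple $A$-modules sitting in a single cohomological degree, then apply the quasi-inverse $\bullet\lotimes_A G$; the case $S_0=P_0$ is handled exactly as you say. The one technical difference is that the paper does not induct via the exact sequence~\eqref{PsiTildeExactSeq}; instead it computes $h^q(\widetilde{\Psi}_p(-k))$ in one stroke by Serre duality, using the definition $\widetilde{\Psi}_p=\Psi_{n+1-p}^{\vee}(-1)$ to get $h^q(\widetilde{\Psi}_p(-k))=h^{n-q}(\Psi_{n+1-p}(1+k-n))$ and then invoking Kapranov's vanishing for the $\Psi_j$. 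This is slightly cleaner than your inductive route, since it bypasses the need to compute $\RHom(G,\Omega^{i-2}_{\mathbb{P}^{n+1}}(i-2)|_{\mathbb{Q}^n})$ at each step, but both arguments rest on the same ingredients (Kapranov's vanishing, Lemma~\ref{SpinorVanishOmegaP}, and the cited spinor computations) and arrive at the same identification of the simple modules.
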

\begin{proof}
It follows from \cite[Proposition 4.11]{MR0939472}
that for $0\leq j, k\leq n-1$ and $0\leq q\leq n$ we have 
$h^j(\Psi_j(-j))=1$ and $h^q(\Psi_j(-k))=0$
unless $(q,k)=(j,j)$.
Since $h^q(\widetilde{\Psi}_p(-k))=h^q(\Psi_{n+1-p}^{\vee}(-1-k))=
h^{n-q}(\Psi_{n+1-p}(1+k-n))$, this implies that for $2\leq p\leq n+1$ and $0\leq k\leq n-1$
we have  
\begin{equation}\label{KapranovVanishingCor}
h^{p-1}(\widetilde{\Psi}_p(-p+2))=1
\textrm{ and }h^q(\widetilde{\Psi}_p(-k))=0\textrm{ unless }(q,k)=(p-1,p-2).
\end{equation}

Suppose that $n$ is odd. Then \eqref{KapranovVanishingCor} 
and Lemma~\ref{SpinorVanishOmegaP} (2)
show that $\RHom(G,\widetilde{\Psi}_p)=S_{p-1}[1-p]$ for $3\leq p\leq n+1$.
Therefore we obtain \eqref{SiOdd}.
Suppose that $n$ is even. 
Then \eqref{KapranovVanishingCor}  and Lemma~\ref{SpinorVanishOmegaP} (2)
show that $\RHom(G,\widetilde{\Psi}_p)=S_{p}[1-p]$ for $3\leq p\leq n+1$.
Therefore we obtain \eqref{SiEven}.

Suppose that $n$ is odd. Then it follows from \cite[Lemma 8.2 (1)]{MR4453350}
and Lemma~\ref{SpinorVanishOmegaP} (1) that $\RHom(G,\mathcal{S}(-1))=S_1[-1]$.
Hence we obtain \eqref{S1Sdual}.
Suppose that $n$ is even. Then 
it follows from \cite[Lemma 8.2 (1)]{MR4453350}
and Lemma~\ref{SpinorVanishOmegaP} (1) that $\RHom(G,\mathcal{S}^{-}(-1))=S_1[-1]$
and $\RHom(G,\mathcal{S}^{+}(-1))=S_2[-1]$.
Hence we obtain \eqref{S1S-Even} and \eqref{S2S+Even}.

Finally \eqref{S0OOdd} and \eqref{S0OEven} hold since $S_0$ is nothing but 
a projective module $P_0=e_0A$.
\end{proof}

Theorem~\ref{Chern2OnQ4} will be proved, based on the following
theorem~\cite[Theorem 1.1]{QuadricThreefoldc1=2}:
\begin{thm}\label{Chern2OnQ3}
Let $\mathcal{E}$ be a nef vector bundle of rank $r$
on a smooth hyperquadric $\mathbb{Q}^3$ of dimension $3$
over an algebraically closed field $K$ 
of characteristic zero,
and let $\mathcal{S}$ be the spinor bundle on $\mathbb{Q}^3$.
Suppose that 
$\det\mathcal{E}\cong \mathcal{O}(2)$.
Then 
$\mathcal{E}$ is isomorphic to one of the following vector bundles
or fits in one of the following exact sequences:
\begin{enumerate}
\item[(1)]
$\mathcal{O}(2)\oplus \mathcal{O}^{\oplus r-1}$;
\item[(2)] $\mathcal{O}(1)^{\oplus 2}\oplus\mathcal{O}^{\oplus r-2}$;
\item[(3)] 
$\mathcal{O}(1)\oplus 
\mathcal{S}\oplus
\mathcal{O}^{\oplus r-3}$;
\item[(4)] $0\to \mathcal{O}(-1)\to \mathcal{O}(1)\oplus 
\mathcal{O}^{\oplus r}
\to \mathcal{E}\to 0$;
\item[(5)] $0\to \mathcal{O}^{\oplus a}\to\mathcal{S}^{\oplus 2}\oplus\mathcal{O}^{\oplus r-4+a}
\to \mathcal{E}\to 0$, where $a=0$ or $1$,
and the composite of the injection 
$\mathcal{O}^{\oplus a}\to\mathcal{S}^{\oplus 2}\oplus\mathcal{O}^{\oplus r-4+a}$
and the projection $\mathcal{S}^{\oplus 2}\oplus\mathcal{O}^{\oplus r-4+a}
\to \mathcal{O}^{\oplus r-4+a}$ is zero;
\item[(6)] 
$0\to \mathcal{S}(-1)\oplus  \mathcal{O}(-1)
\to \mathcal{O}^{\oplus r+3}
\to \mathcal{E}\to 0$;
\item[(7)] $0\to \mathcal{O}(-1)^{\oplus 2}\to 
\mathcal{O}^{\oplus r+2}
\to \mathcal{E}\to 0$;
\item[(8)] $0\to \mathcal{O}(-2)\to \mathcal{O}^{\oplus r+1}
\to \mathcal{E}\to 0$;
\item[(9)] 
$0\to \mathcal{O}(-2)\to 
\mathcal{O}(-1)^{\oplus 4}
\to 
\mathcal{O}^{\oplus r+3}
\to \mathcal{E}\to 0$.
\end{enumerate}
\end{thm}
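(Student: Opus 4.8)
The plan is to run the Bondal spectral sequence \eqref{BondalSpectral} for $\mathcal{E}$ on $\mathbb{Q}^3$ and read off a minimal locally free resolution. Here $n=3$ is odd, so the relevant collection is $(\mathcal{O},\mathcal{S},\mathcal{O}(1),\mathcal{O}(2))$, and by Lemma~\ref{S2Arilemma}(1) the only building blocks $S_i\lotimes_A G$ that can occur are $\mathcal{O}$ (from $S_0$), $\mathcal{S}(-1)[1]$ (from $S_1$), $\widetilde{\Psi}_3[2]$ (from $S_2$), and $\widetilde{\Psi}_4=\mathcal{O}(-1)$ shifted by $[3]$ (from $S_3$). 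Thus everything reduces to computing the four right $A$-modules $\Ext^q(G_i,\mathcal{E})$ for $G_i\in\{\mathcal{O},\mathcal{S},\mathcal{O}(1),\mathcal{O}(2)\}$, that is, the cohomology groups $H^q(\mathcal{E})$, $H^q(\mathcal{S}(-1)\otimes\mathcal{E})$, $H^q(\mathcal{E}(-1))$, $H^q(\mathcal{E}(-2))$, together with the dimensions $\dim_K\Gr^i$ of their graded pieces, which become the multiplicities of the blocks.

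First I would extract the numerical constraints from nefness. Restricting $\mathcal{E}$ to a general line $\ell$ gives $\mathcal{E}|_\ell\cong\mathcal{O}(2)\oplus\mathcal{O}^{\oplus r-1}$ or $\mathcal{O}(1)^{\oplus 2}\oplus\mathcal{O}^{\oplus r-2}$, since a nef bundle restricts with nonnegative splitting type and $c_1\cdot\ell=2$; this already separates the type-$(2,0,\dots)$ and type-$(1,1,0,\dots)$ regimes and controls $h^0(\mathcal{E}(-1))$ and $h^0(\mathcal{E}(-2))$. Next I would pin down the higher cohomology: nefness makes $\mathcal{E}(t)$ ample for $t\geq 1$, so Kodaira/Le~Potier vanishing on $\mathbb{Q}^3$ (where $K=\mathcal{O}(-3)$), combined with Serre duality $H^q(\mathcal{E}(t))^{\vee}\cong H^{3-q}(\mathcal{E}^{\vee}(-t-3))$, concentrates the $\Ext$ groups in a narrow range of $q$ for the twists $t\in\{0,-1,-2\}$. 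The $\Ext^\bullet(\mathcal{S},\mathcal{E})$ contributions are handled with $\mathcal{S}^{\vee}\cong\mathcal{S}(-1)$ and the standard sequence $0\to\mathcal{S}(-1)\to\mathcal{O}^{\oplus 4}\to\mathcal{S}\to 0$.

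With the cohomology concentrated, the $E_2$-page is supported in few spots and the spectral sequence collapses to a complex of length at most three built from $\mathcal{O}^{\oplus a}$, $\mathcal{S}(-1)^{\oplus b}$, $\widetilde{\Psi}_3^{\oplus c}$, $\mathcal{O}(-1)^{\oplus d}$. To convert this into the stated exact sequences I would substitute the Euler-type resolution $0\to\mathcal{O}(-2)\to\mathcal{O}(-1)^{\oplus 5}\to\widetilde{\Psi}_3\to 0$, coming from the Euler sequence on $\mathbb{P}^4$ twisted by $-2$ and restricted to $\mathbb{Q}^3$ (exact since $\widetilde{\Psi}_3$ is locally free); this rewrites the block $\widetilde{\Psi}_3$ in terms of $\mathcal{O}(-1)$ and $\mathcal{O}(-2)$ and produces the terms in cases (4) and (6)--(8) as well as the genuine three-step resolution (9), while the spinor summand $\mathcal{S}(-1)$ survives directly in cases (3),(5),(6). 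Imposing $\det\mathcal{E}\cong\mathcal{O}(2)$ and rank $r$ then constrains the admissible multiplicity vectors $(a,b,c,d)$, each surviving vector yielding one item of the list.

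The main obstacle is the rigidity and bookkeeping at the end. After the spectral sequence produces a complex, I must show its differentials have the expected generic rank, so that the cokernel $\mathcal{E}$ is genuinely a vector bundle rather than merely a sheaf; rule out the numerically admissible but geometrically impossible multiplicity vectors using nefness together with the Chern-class inequalities it imposes; and treat the delicate items individually — in particular the parameter $a\in\{0,1\}$ with the vanishing-of-composite condition in case (5), and the true length-three resolution in case (9), where the map $\mathcal{O}(-2)\to\mathcal{O}(-1)^{\oplus 4}$ must be analyzed directly. Establishing that the list is simultaneously exhaustive and that every case is realized by an actual nef bundle is where the real effort lies: the spectral-sequence formalism organizes the candidate resolutions, but confirming nefness of each resulting cokernel and the non-degeneracy of the connecting maps has to be carried out case by case.
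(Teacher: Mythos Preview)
The paper does not prove Theorem~\ref{Chern2OnQ3}; it is imported from the author's prior preprint \cite[Theorem~1.1]{QuadricThreefoldc1=2} and used as input for the $n\geq 4$ classification. There is no proof in the present paper to compare your proposal against.

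That said, your outline matches the methodology the paper applies in the higher-dimensional cases, and presumably in the cited preprint as well: run the Bondal spectral sequence for the collection $(\mathcal{O},\mathcal{S},\mathcal{O}(1),\mathcal{O}(2))$, compute $\Ext^q(G,\mathcal{E})$ via vanishing and Riemann--Roch, and pull back the graded pieces $S_i$ using Lemma~\ref{S2Arilemma}. Your identification $\widetilde{\Psi}_3\cong T_{\mathbb{P}^4}(-2)|_{\mathbb{Q}^3}$ and its Euler resolution are correct. Two places where the sketch would need tightening before it becomes a proof: first, controlling $h^0(\mathcal{E}(-1))$ and $h^0(\mathcal{E}(-2))$ from the splitting type on a general line is not direct---in the paper's analogous steps the control comes from a dedicated vanishing lemma (cf.\ \cite[Lemma~4.1]{MR4453350}) together with positivity of the top self-intersection $H(\mathcal{E})^{r+2}$, not from raw Le~Potier; second, pinning down the multiplicity vectors uses more than the determinant and rank constraints---the Schur-polynomial inequalities for nef bundles combined with explicit Riemann--Roch on $\mathbb{Q}^3$ are what actually force the nine cases, and your sketch does not yet indicate how the admissible tuples $(a,b,c,d)$ are enumerated or how the spurious ones are eliminated.
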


\section{Hirzebruch-Riemann-Roch formulas}\label{HRR}
Let $\mathcal{E}$ be a vector bundle of rank $r$ on $\mathbb{Q}^4$.
Since the tangent bundle $T$ of $\mathbb{Q}^4$ fits in an exact sequence
\[0\to T\to T_{\mathbb{P}^5}|_{\mathbb{Q}^4}
\to \mathcal{O}_{\mathbb{Q}^4}(2)\to 0,\]
the Chern polynomial $c_t(T)$ of $T$ is 
\[\dfrac{(1+ht)^6}{1+2ht}=1+4ht+7h^2t^2+6h^3t^3+3h^4t^4,\]
where $h$ denotes $c_1(\mathcal{O}_{\mathbb{Q}^4}(1))$.
Then the Hirzebruch-Riemann-Roch formula implies that
\[
\begin{split}
\chi(\mathcal{E})=&r+\dfrac{7}{6}c_1h^3
+\dfrac{23}{24}(c_1^2-2c_2)h^2+
\dfrac{1}{3}(c_1^3-3c_1c_2
+3c_3)h\\
&+\dfrac{1}{24}(c_1^4-4c_1^2c_2+4c_1c_3+2c_2^2-4c_4),
\end{split}
\]
where we set $c_i=c_i(\mathcal{E})$.
To compute $\chi(\mathcal{E}(t))$, note that 
\begin{align*}
c_1(\mathcal{E}(t))&=c_1+rht;\\
c_2(\mathcal{E}(t))&
=c_2+(r-1)c_1ht+
\binom{r}{2}
h^2t^2;\\
c_3(\mathcal{E}(t))&=c_3+(r-2)c_2ht+
\binom{r-1}{2}c_1h^2t^2+\binom{r}{3}h^3t^3;\\
c_4(\mathcal{E}(t))&=c_4+(r-3)c_3ht+
\binom{r-2}{2}c_2h^2t^2+\binom{r-1}{3}h^3t^3+\binom{r}{4}h^4t^4.
\end{align*}
Since $h^4=2$, we infer that 
\begin{equation}\label{generalRRonQ3}
\begin{split}
\chi(\mathcal{E}(t))=&\dfrac{r}{12}(t+1)(t+2)^2(t+3)+\dfrac{1}{6}c_1h^3t^3
+\dfrac{1}{4}\{4c_1h
+(c_1^2-2c_2)\}h^2t^2\\
&+\dfrac{1}{12}\{23c_1h^2+12(c_1^2-2c_2)h+2(c_1^3-3c_1c_2+3c_3)\}ht\\
&+\dfrac{7}{6}c_1h^3
+\dfrac{23}{24}(c_1^2-2c_2)h^2+
\dfrac{1}{3}(c_1^3-3c_1c_2
+3c_3)h\\
&+\dfrac{1}{24}(c_1^4-4c_1^2c_2+4c_1c_3+2c_2^2-4c_4).
\end{split}
\end{equation}
Since $c_1(\mathcal{E})=dh$ for some integer $d$,
the formula above can be written as
\begin{equation}
\begin{split}
\chi(\mathcal{E}(t))=&\dfrac{r}{12}(t+1)(t+2)^2(t+3)+\dfrac{d}{3}t^3
+\dfrac{1}{2}\{4d
+(d^2-c_2h^2)\}t^2\\
&+\dfrac{1}{6}\{23d+12(d^2-c_2h^2)+(2d^3-3dc_2h^2+3c_3h)\}t\\
&+\dfrac{7}{3}d
+\dfrac{23}{12}(d^2-c_2h^2)+
\dfrac{1}{3}(2d^3-3dc_2h^2
+3c_3h)\\
&+\dfrac{1}{12}(d^4-2d^2c_2h^2+2dc_3h+c_2^2-2c_4).
\end{split}
\end{equation}
In this paper, we are dealing with the case $d=2$:
\begin{equation}\label{e(t)RR}
\begin{split}
\chi(\mathcal{E}(t))=&\dfrac{r}{12}(t+1)(t+2)^2(t+3)+\dfrac{1}{3}(2t^3+18t^2+55t+57)
-\dfrac{1}{2}c_2h^2t^2\\
&+\dfrac{1}{2}(c_3h-6c_2h^2)t+\dfrac{1}{12}(16c_3h-55c_2h^2)
+\dfrac{1}{12}(c_2^2-2c_4).
\end{split}
\end{equation}
In particular,
\begin{align}
\chi(\mathcal{E}(-1))&=
6-\dfrac{25}{12}c_2h^2
+\dfrac{5}{6}c_3h+\dfrac{1}{12}(c_2^2-2c_4);\label{e(-1)RR}\\
\chi(\mathcal{E}(-2))&=
1-\dfrac{7}{12}c_2h^2
+\dfrac{1}{3}c_3h++\dfrac{1}{12}(c_2^2-2c_4).\label{e(-2)RR}
\end{align}

Next we will compute $\chi(\mathcal{S}^{\pm\vee}\otimes \mathcal{E}(t))$.
Recall that $c_1(\mathcal{S}^{\pm})=h$.
Note also that 
\begin{align*}
\rk \mathcal{S}^{\pm\vee}\otimes \mathcal{E}=&2r;\\
c_1(\mathcal{S}^{\pm\vee}\otimes \mathcal{E})=&2c_1-rh;\\
c_2(\mathcal{S}^{\pm\vee}\otimes \mathcal{E})=
&2c_2-(2r-1)c_1h
+c_1^2+\binom{r}{2}h^2+rc_2(\mathcal{S}^{\pm});\\
c_3(\mathcal{S}^{\pm\vee}\otimes\mathcal{E})=
&2c_3-2(r-1)c_2h+(r-1)^2c_1h^2
+2(r-1)c_1c_2(\mathcal{S}^{\pm})\\
&+2c_1c_2-(r-1)c_1^2h-\binom{r}{3}h^3-r(r-1)c_2(\mathcal{S}^{\pm})h;\\
c_4(\mathcal{S}^{\pm\vee}\otimes\mathcal{E})=
&2c_4+2c_1c_3+c_2^2-(2r-3)c_3h+(r^2-3r+3)c_2h^2
+2(r-3)c_2c_2(\mathcal{S}^{\pm})\\
&-\dfrac{1}{6}(r-1)(r-2)(2r-3)c_1h^3-(r-1)(2r-3)c_1c_2(\mathcal{S})h\\
&-(2r-3)c_1c_2h+\binom{r-1}{2}c_1^2h^2+(r-1)c_1^2c_2(\mathcal{S})+\dfrac{1}{12}r^2(r^2-1).
\end{align*}
Since $c_2(\mathcal{S}^{\pm})^2=1$ and $c_2(\mathcal{S}^{\pm})h^2=1$,
the formula \eqref{generalRRonQ3} together with the formulas above 
implies the following formula:
\begin{align*}
\chi(\mathcal{S}^{\pm\vee}\otimes \mathcal{E}(t))=&\dfrac{r}{6}t(t+1)(t+2)(t+3)+\dfrac{1}{3}c_1h^3t^3
+\dfrac{1}{2}(3c_1h^3+c_1^2h^2-2c_2h^2)t^2\\
&+\dfrac{1}{6}(14c_1h^3+9c_1^2h^2-18c_2h^2+2c_1^3h-6c_1c_2(\mathcal{S}^{\pm})h-6c_1c_2h+6c_3h)t
\\
&+\dfrac{1}{12}(15c_1h^3+14c_1^2h^2-28c_2h^2+6c_1^3h-18c_1c_2(\mathcal{S}^{\pm})h-18c_1c_2h+18c_3h)
\\
&+\dfrac{1}{12}(c_1^4-4c_1^2c_2+4c_1c_3+2c_2^2-4c_4)
-\dfrac{1}{2}c_1^2c_2(\mathcal{S}^{\pm})+c_2c_2(\mathcal{S}^{\pm}).
\end{align*}
Since $c_1(\mathcal{E})=dh$, the formula above becomes the following formula:
\begin{equation}
\begin{split}\label{SobokuRRwithSpinor}
\chi(\mathcal{S}^{\pm\vee}\otimes \mathcal{E}(t))=&
\dfrac{r}{6}t(t+1)(t+2)(t+3)+\dfrac{2}{3}dt^3+(d^2+3d-c_2h^2)t^2\\
&+\dfrac{1}{3}\{2d^3+9d^2+11d-3(d+3)c_2h^2+3c_3h\}t\\
&+\dfrac{1}{6}d(d+1)(d+2)(d+3)
-\dfrac{1}{6}(2d^2+9d+14)c_2h^2\\
&+\dfrac{1}{6}(2d+9)c_3h+\dfrac{1}{6}(c_2^2-2c_4)
+c_2c_2(\mathcal{S}^{\pm}).
\end{split}
\end{equation}
In this paper, we are dealing with the case $d=2$:
\begin{equation}
\begin{split}\label{SERR}
\chi(\mathcal{S}^{\pm\vee}\otimes \mathcal{E}(t))=&
\dfrac{r}{6}t(t+1)(t+2)(t+3)+\dfrac{2}{3}(2t+5)(t+2)(t+3)\\
&-\left(t^2+5t+\dfrac{20}{3}\right)c_2h^2+\left(t+\dfrac{13}{6}\right)c_3h\\
&+\dfrac{1}{6}(c_2^2-2c_4)+c_2c_2(\mathcal{S}^{\pm}).
\end{split}
\end{equation}
In particular, we have the following:
\begin{equation}\label{SERR(-1)}
\chi(\mathcal{S}^{\pm\vee}\otimes \mathcal{E}(-1))=
4-\dfrac{8}{3}c_2h^2+\dfrac{7}{6}c_3h+\dfrac{1}{6}(c_2^2-2c_4)+c_2c_2(\mathcal{S}^{\pm}).
\end{equation}

\section{Set-up for the proof of Theorem~\ref{Chern2OnQ4}
}\label{Set-up for the case $n=4$}
Let
$\mathcal{E}$ be a nef vector bundle of rank $r$ on $\mathbb{Q}^n$
of dimension $n\geq 4$
with 
$c_1=2h$.
If $h^0(\mathcal{E}(-2))\neq 0$,
then $\mathcal{E}\cong 
\mathcal{O}(2)\oplus \mathcal{O}^{\oplus r-1}$ by 
\cite[Proposition 5.1 and Remark 5.3]{MR4453350}.
This is Case (1) of Theorem~\ref{Chern2OnQ4}.
Thus we will always assume 
that 
\begin{equation}\label{h^0(E(-2))vanish}
h^0(\mathcal{E}(-2))=0
\end{equation}
in the following.
It follows from \cite[Lemma~4.1 (1)]{MR4453350} that
\begin{equation}\label{firstvanishing}
h^q(\mathcal{E}(t))=0 \textrm{ for } q>0 \textrm{ and } t\geq -1.
\end{equation}
Furthermore, from \eqref{h^0(E(-2))vanish} and \cite[Lemma~4.1 (1)]{MR4453350}, 
it follows that 
\begin{equation}\label{firstvanishinggeq5}
h^q(\mathcal{E}(-2))=0 \textrm{ for any } q, \textrm{ if }n\geq 5.
\end{equation}
Denote by $(\mathcal{S}^{(\pm)},\mathcal{S}^{(\mp)})$
a pair $(\mathcal{S}^{+},\mathcal{S}^{-})$
or $(\mathcal{S}^{-},\mathcal{S}^{+})$ of spinor bundles if $n$ is even,
and a pair $(\mathcal{S},\mathcal{S})$ of the spinor bundle if $n$ is odd.
Ottaviani shows in \cite[Theorem~2.8]{ot} that 
we have an exact sequence
\begin{equation}\label{SSdual}
0\to \mathcal{S}^{(\pm)\vee}\to \mathcal{O}^{\oplus \alpha}\to 
\mathcal{S}^{(\mp)\vee}(1)\to 0,
\end{equation}
where $\alpha=2^{\lceil n/2\rceil}$.
Hence we have the following exact sequences:
\begin{equation}\label{E(-2)twistofSSdual}
0\to \mathcal{S}^{(\pm)\vee}\otimes \mathcal{E}(-2)\to 
\mathcal{E}(-2)^{\oplus \alpha}\to \mathcal{S}^{(\mp)\vee}\otimes\mathcal{E}(-1)\to 0;
\end{equation}
\begin{equation}\label{E(-1)twistofSSdual}
0\to \mathcal{S}^{(\pm)\vee}\otimes \mathcal{E}(-1)\to 
\mathcal{E}(-1)^{\oplus \alpha}\to \mathcal{S}^{(\mp)\vee}\otimes\mathcal{E}\to 0.
\end{equation}
The exact sequence~\eqref{E(-2)twistofSSdual}  
together with \eqref{firstvanishinggeq5}
implies that 
\begin{equation}\label{qq+1joushouForE(-2)n=5}
h^q(\mathcal{S}^{(\mp)\vee}\otimes\mathcal{E}(-1))
=h^{q+1}(\mathcal{S}^{(\pm)\vee}\otimes\mathcal{E}(-2))
\textrm{ for any }q, \textrm{ if }n\geq 5.
\end{equation}
The exact sequence~\eqref{E(-1)twistofSSdual} together with 
\eqref{firstvanishing} implies that 
\begin{equation}\label{qq+1joushouForE(-1)}
h^q(\mathcal{S}^{(\mp)\vee}\otimes\mathcal{E})
=h^{q+1}(\mathcal{S}^{(\pm)\vee}\otimes\mathcal{E}(-1))
\textrm{ for any }q, \textrm{ if }h^0(\mathcal{E}(-1))=0.
\end{equation}
In order to compute $\Ext^q(\mathcal{S}^{(\mp)},\mathcal{E}(-1))$,
we will use the following exact sequence together with 
\eqref{E(-2)twistofSSdual} or \eqref{qq+1joushouForE(-2)n=5}:
\begin{equation}\label{RestrictionForSE(-1)}
0\to \mathcal{S}^{(\pm)\vee}\otimes \mathcal{E}(-2)
\to \mathcal{S}^{(\pm)\vee}\otimes \mathcal{E}(-1)
\to (\mathcal{S}^{(\pm)\vee}\otimes \mathcal{E}(-1))|_{\mathbb{Q}^{n-1}}
\to 0.
\end{equation}
Similarly, in order to compute $\Ext^q(\mathcal{S}^{(\mp)},\mathcal{E})$,
we will use 
the following exact sequence together with 
\eqref{E(-1)twistofSSdual} or \eqref{qq+1joushouForE(-1)}:
\begin{equation}\label{RestrictionForSE}
0\to \mathcal{S}^{(\pm)\vee}\otimes \mathcal{E}(-1)
\to \mathcal{S}^{(\pm)\vee}\otimes \mathcal{E}
\to (\mathcal{S}^{(\pm)\vee}\otimes \mathcal{E})|_{\mathbb{Q}^{n-1}}
\to 0.
\end{equation}
Finally, recall that $\mathcal{S}^{\pm}|_{\mathbb{Q}^{n-1}}\cong \mathcal{S}$
if $n$ is even and that 
$\mathcal{S}|_{\mathbb{Q}^{n-1}}\cong \mathcal{S}^+\oplus \mathcal{S}^-$
if $n$ is odd (see, e.g., \cite[Theorem 8.1 (1) and (3)]{MR4453350}).

\subsection{The case $n=4$}
It follows from \cite[Lemma~4.1 (2)]{MR4453350}
that, if 
\[H(\mathcal{E})^{r+3}
=c_2^2-c_4+2c_1c_3-3c_1^2c_2+c_1^4
=c_2^2-c_4+4c_3h-12c_2h^2+32>0,\]
then 
\[
h^q(\mathcal{E}(-2))=0 \textrm{ for } q>0.
\]
Let $\Delta_{\lambda}(\mathcal{E})$ be the Schur polynomial of $\mathcal{E}$
(see, e.g., \cite[Example 12.1.7]{fl}).
Note here that 
\[
c_4=\Delta_{(4,0,0,0)}(\mathcal{E})\geq 0;\quad 
c_1c_3-c_4=\Delta_{(3,1,0,0)}(\mathcal{E})\geq 0;
\quad 
c_2^2-c_1c_3=\Delta_{(2,2,0,0)}(\mathcal{E})\geq 0,
\]
since $\mathcal{E}$ is nef (see, e.g., \cite[Chapter 8]{MR2095472}).
Hence we have
\begin{equation}\label{c_2squarebiggerC1C3biggerC4}
c_2^2\geq c_1c_3\geq c_4\geq 0.
\end{equation}
Therefore 
\[
H(\mathcal{E})^{r+3}
=c_2^2-c_4+4c_3h-12c_2h^2+32
\geq 
4c_3h-12c_2h^2+32.
\]
Thus we see that 
\begin{equation}\label{c2h<4vanishing}
h^q(\mathcal{E}(-2))=0 \textrm{ for } q>0
\textrm{ if }
c_3h-3c_2h^2+8>0.
\end{equation}
Hence it follows from 
\eqref{E(-2)twistofSSdual},
\eqref{h^0(E(-2))vanish},
and \eqref{c2h<4vanishing}
that 
\begin{equation}\label{qq+1joushouForE(-2)n=4}
h^q(\mathcal{S}^{(\mp)\vee}\otimes\mathcal{E}(-1))
=h^{q+1}(\mathcal{S}^{(\pm)\vee}\otimes\mathcal{E}(-2))
\textrm{ for any }q, \textrm{ if }
c_3h-3c_2h^2+8>0.
\end{equation}

\section{The case where $\mathcal{E}|_{\mathbb{Q}^3}$ belongs to Case
(1) of Theorem~\ref{Chern2OnQ3}}\label{Case(1)OfTheoremChern2OnQ3}
This case does not arise under the assumption~\eqref{h^0(E(-2))vanish}.
Indeed, 
if  $\mathcal{E}|_{\mathbb{Q}^3}\cong \mathcal{O}(2)
\oplus \mathcal{O}^{\oplus r-1}$ and $n=4$,
we have $c_2h^2=0$ and $c_3h=0$. Hence 
$h^q(\mathcal{E}(-2))=0$ for any $q$
by \eqref{c2h<4vanishing} and \eqref{h^0(E(-2))vanish},
and we obtain 
$\chi(\mathcal{E}(-2))=0$.
From \eqref{e(-2)RR},
it follows that 
\begin{equation}\label{c2^2-2c4negative}
\frac{1}{12}(c_2^2-2c_4)=-1.
\end{equation}

We have 
$h^q(\mathcal{S}^{\mp\vee}\otimes\mathcal{E}(-1))
=h^{q+1}(\mathcal{S}^{\pm\vee}\otimes\mathcal{E}(-2))$ for any $q$
by \eqref{qq+1joushouForE(-2)n=4}.
In particular, $h^0(\mathcal{S}^{\pm\vee}\otimes\mathcal{E}(-2))=0$.
Since $\mathcal{S}^{\vee}\otimes \mathcal{E}(-1)|_{\mathbb{Q}^3}\cong 
\mathcal{S}\oplus \mathcal{S}(-2)^{\oplus r-1}$,
we have $h^0(\mathcal{S}^{\vee}\otimes \mathcal{E}(-1)|_{\mathbb{Q}^3})=4$
and $h^q(\mathcal{S}^{\vee}\otimes \mathcal{E}(-1)|_{\mathbb{Q}^3})=0$ for $q>0$.
The exact sequence~\eqref{RestrictionForSE(-1)}
then implies that  
$h^{q+1}(\mathcal{S}^{\pm\vee}\otimes\mathcal{E}(-2))=
h^{q+1}(\mathcal{S}^{\pm\vee}\otimes\mathcal{E}(-1))$ for $q\geq 1$.
Hence we have $h^q(\mathcal{S}^{\mp\vee}\otimes\mathcal{E}(-1))
=h^{q+1}(\mathcal{S}^{\pm\vee}\otimes\mathcal{E}(-1))$ for $q\geq 1$. Consequently 
we see that $h^q(\mathcal{S}^{\mp\vee}\otimes\mathcal{E}(-1))=0$ for $q\geq 1$.
Set $a=h^0(\mathcal{S}^{+}\otimes \mathcal{E}(-1))$.
Then 
$4-a=h^1(\mathcal{S}^{+\vee}\otimes \mathcal{E}(-2))
=h^0(\mathcal{S}^{-\vee}\otimes \mathcal{E}(-1))$.
Therefore 
it follows from \eqref{SERR(-1)} and \eqref{c2^2-2c4negative} 
that 
\[a=\chi(\mathcal{S}^{+}\otimes \mathcal{E}(-1))=2+c_2c_2(\mathcal{S}^+)\]
and 
that 
\[4-a=\chi(\mathcal{S}^{-}\otimes \mathcal{E}(-1))=2+c_2c_2(\mathcal{S}^-).\]
Since $c_2(\mathcal{S}^{\pm})\cap [\mathbb{Q}^4]$ is represented by a $2$-plane $\mathbb{P}^2$,
the nefness of $\mathcal{E}|_{\mathbb{P}^2}$ implies that $c_2c_2(\mathcal{S}^{\pm})\geq 0$
by \cite[Theorem 8.2.1]{MR2095472}.
Hence we conclude that $a=2$ and $c_2c_2(\mathcal{S}^{\pm})=0$.
Set $\sigma=c_2(\mathcal{S}^{+})\cap [\mathbb{Q}^4]$
and $\tau=c_2(\mathcal{S}^{-})\cap [\mathbb{Q}^4]$. 
Then $A^2\mathbb{Q}^4$ is generated by $\sigma$ and $\tau$.
Since $c_2\sigma=0$ and $c_2\tau=0$, it follows from $c_2\in A^2\mathbb{Q}^4$ that $c_2^2=0$.
Hence  we have $c_4=0$ by \eqref{c_2squarebiggerC1C3biggerC4}.
This however contradicts \eqref{c2^2-2c4negative}.
Therefore this case does not arise under the assumption \eqref{h^0(E(-2))vanish}.

\section{The case where $\mathcal{E}|_{\mathbb{Q}^3}$ belongs to Case
(2) of Theorem~\ref{Chern2OnQ3}}\label{Case(2)OfTheoremChern2OnQ3}

\subsection{The case $n=4$}
Suppose that 
$\mathcal{E}|_{\mathbb{Q}^3}\cong 
\mathcal{O}(1)^{\oplus 2}
\oplus \mathcal{O}^{\oplus r-2}$.
Then $c_2h^2=2$ and $c_3h=0$.
Hence $h^q(\mathcal{E}(-2))=0$ for 
any $q$
by \eqref{c2h<4vanishing}
and \eqref{h^0(E(-2))vanish}.
Since $h^0(\mathcal{E}(-1)|_{\mathbb{Q}^3})=2$
and $h^q(\mathcal{E}(-1)|_{\mathbb{Q}^3})=0$ for $q>0$,
this implies that 
$h^0(\mathcal{E}(-1))=2$
and that $h^q(\mathcal{E}(-1))=0$ for $q>0$.
Since $h^q(\mathcal{E}(-2)|_{\mathbb{Q}^3})=0$ for any $q$, we infer that 
$h^q(\mathcal{E}(-3))=0$
for any $q$.
Since $h^3(\mathcal{E}(-3)|_{\mathbb{Q}^3})=r-2$
and $h^q(\mathcal{E}(-3)|_{\mathbb{Q}^3})=0$ unless $q=3$, we infer that 
$h^4(\mathcal{E}(-4))=r-2$ and that $h^q(\mathcal{E}(-4))=0$
unless $q=4$.

We have 
$h^q(\mathcal{S}^{\mp\vee}\otimes\mathcal{E}(-1))=
h^{q+1}(\mathcal{S}^{\pm\vee}\otimes\mathcal{E}(-2))
$ for any $q$
by \eqref{qq+1joushouForE(-2)n=4}.
Since 
$(\mathcal{S}^{\pm\vee}\otimes\mathcal{E}(-1))|_{\mathbb{Q}^3}
\cong \mathcal{S}(-1)^{\oplus 2}
\oplus \mathcal{S}(-2)^{\oplus r-2}$,
we see that 
$h^q((\mathcal{S}^{\pm\vee}\otimes\mathcal{E}(-1))|_{\mathbb{Q}^3})=0$
for any $q$.
The exact sequence~\eqref{RestrictionForSE(-1)}
then implies that 
$h^{q+1}(\mathcal{S}^{\pm\vee}\otimes\mathcal{E}(-2))
=h^{q+1}(\mathcal{S}^{\pm\vee}\otimes\mathcal{E}(-1))$
for any $q$.
Hence $h^q(\mathcal{S}^{\mp\vee}\otimes\mathcal{E}(-1))=
h^{q+1}(\mathcal{S}^{\pm\vee}\otimes\mathcal{E}(-1))$
for any $q$.
Thus $h^q(\mathcal{S}^{\mp\vee}\otimes\mathcal{E}(-1))=0$ for any $q$.

We apply to $\mathcal{E}(-1)$ 
the Bondal spectral sequence~\eqref{BondalSpectral}.
We see that $\Hom(G,\mathcal{E}(-1))=S_0^{\oplus 2}$,
that $\Ext^q(G,\mathcal{E}(-1))=0$ for $1\leq q\leq 3$,
and that $\Ext^4(G,\mathcal{E}(-1))=S_5^{\oplus r-2}$.
Hence $E_2^{p,q}=0$ unless $q=0$ or $q=4$,
$E_2^{p,0}=0$ unless $p=0$, $E_2^{0,0}=\mathcal{O}^{\oplus 2}$,
$E_2^{p,4}=0$ unless $p=-4$, and $E_2^{-4,4}=\mathcal{O}(-1)^{\oplus r-2}$
by Lemma~\ref{S2Arilemma} (2).
Therefore $\mathcal{E}(-1)$ fits in an exact sequence
\[
0\to \mathcal{O}^{\oplus 2}\to \mathcal{E}(-1)\to \mathcal{O}(-1)^{\oplus r-2}
\to 0.
\]
Hence $\mathcal{E}\cong \mathcal{O}(1)^{\oplus 2}\oplus\mathcal{O}^{\oplus r-2}$.
This is Case (2) of Theorem~\ref{Chern2OnQ4}.

\subsection{The case $n\geq 5$}
Suppose that 
$\mathcal{E}|_{\mathbb{Q}^{n-1}}\cong \mathcal{O}(1)^{\oplus 2}\oplus \mathcal{O}^{\oplus r-2}$.
We will show that 
$\mathcal{E}$ is isomorphic to $\mathcal{O}(1)^{\oplus 2}\oplus \mathcal{O}^{\oplus r-2}$
by applying to $\mathcal{E}(-1)$ 
the Bondal spectral sequence~\eqref{BondalSpectral}.

First, $h^q(\mathcal{E}(-2))=0$ for any $q$
by \eqref{firstvanishinggeq5}.
Hence we see that $h^0(\mathcal{E}(-1))=2$.
We have $h^q(\mathcal{E}(-1))=0$ for $q>0$ by 
\eqref{firstvanishing}.
Note that $h^q(\mathcal{E}(-t)|_{\mathbb{Q}^{n-1}})=0$ for any $q$ and any $t$
$(2\leq t\leq n-2)$.
Since $h^q(\mathcal{E}(-2))=0$ for any $q$,
this implies that $h^q(\mathcal{E}(-t))=0$ for any $q$ and any $t$
$(2\leq t\leq n-1)$.
Since $h^{n-1}(\mathcal{E}(-n+1)|_{\mathbb{Q}^{n-1}})=r-2$
and $h^q(\mathcal{E}(-n+1)|_{\mathbb{Q}^{n-1}})=0$ unless $q=n-1$,
we infer that $h^{n}(\mathcal{E}(-n))=r-2$
and that $h^q(\mathcal{E}(-n))=0$ unless $q=n$.

Note that  
$h^q( (\mathcal{S}^{(\pm)\vee}\otimes \mathcal{E}(-1))|_{\mathbb{Q}^{n-1}})=0$
for any $q$. Therefore 
the exact sequence~\eqref{RestrictionForSE(-1)} implies that 
$h^{q+1}(\mathcal{S}^{(\pm)\vee}\otimes \mathcal{E}(-2))
=h^{q+1}(\mathcal{S}^{(\pm)\vee}\otimes \mathcal{E}(-1))$ for any $q$.
This implies that 
$h^{q}(\mathcal{S}^{(\mp)\vee}\otimes \mathcal{E}(-1))=
h^{q+1}(\mathcal{S}^{(\pm)\vee}\otimes \mathcal{E}(-1))$ for any $q$
by \eqref{qq+1joushouForE(-2)n=5}.
Hence 
$\Ext^q(\mathcal{S}^{(\mp)},\mathcal{E}(-1))=0$ for  any $q$.

Now Lemma~\ref{S2Arilemma}
shows that $E_2^{p,q}=0$ unless 
$(p,q)=(0,0)$ or $(-n,n)$,
that $E_2^{0,0}=\mathcal{O}^{\oplus 2}$,
and that $E_2^{-n,n}=\mathcal{O}(-1)^{\oplus r-2}$.
Therefore $\mathcal{E}(-1)$ fits in an exact sequence
\[
0\to \mathcal{O}^{\oplus 2}\to \mathcal{E}(-1)\to \mathcal{O}(-1)^{\oplus r-2}
\to 0.
\]
Hence $\mathcal{E}\cong \mathcal{O}(1)^{\oplus 2}\oplus\mathcal{O}^{\oplus r-2}$.
This is Case (2) of Theorem~\ref{Chern2OnQ4}.

\section{The case where $\mathcal{E}|_{\mathbb{Q}^2}$ belongs to Case
(3) of Theorem~\ref{Chern2OnQ3}}

\subsection{The case $n=4$}
Suppose that $\mathcal{E}|_{\mathbb{Q}^3}
\cong 
\mathcal{O}(1)\oplus
\mathcal{S}\oplus    
\mathcal{O}^{\oplus r-3}$.
Then $c_2h^2=3$ and $c_3h=1$.
We will show that $\mathcal{E}
\cong
\mathcal{O}(1)\oplus
\mathcal{S}^{\pm}\oplus    
\mathcal{O}^{\oplus r-3}$
by applying
to $\mathcal{E}(-1)$
the Bondal spectral sequence~\eqref{BondalSpectral}.

We have $h^q(\mathcal{E}(-1))=0$ for $q>0$ by \eqref{firstvanishing}.
Note that $h^q(\mathcal{E}(-1)|_{\mathbb{Q}^3})=0$ for $q>0$
and that $h^0(\mathcal{E}(-1)|_{\mathbb{Q}^3})=1$.
Hence $h^q(\mathcal{E}(-2))=0$ for $q\geq 2$.
The assumption~\eqref{h^0(E(-2))vanish} together with \eqref{e(-2)RR}
and 
\eqref{c_2squarebiggerC1C3biggerC4}
shows that 
\[-h^1(\mathcal{E}(-2))=\chi(\mathcal{E}(-2))
=-\frac{5}{12}+\frac{1}{12}(c_2^2-2c_4)\geq -\dfrac{5+c_4}{12}.\]
Note here that $c_1c_3=2$ since $c_3h=1$. Hence $c_4\leq 2$ by \eqref{c_2squarebiggerC1C3biggerC4}.
Therefore $h^1(\mathcal{E}(-2))\leq
7/12$, and thus $h^1(\mathcal{E}(-2))=0$.
Since $h^0(\mathcal{E}(-1)|_{\mathbb{Q}^3})=1$,
this implies that $h^0(\mathcal{E}(-1))=1$.
Now that $h^q(\mathcal{E}(-2))=0$
for any $q$, we have 
$h^q(\mathcal{E}(-3))=h^{q-1}(\mathcal{E}(-2)|_{\mathbb{Q}^3})=0$
for any $q$.
Hence $h^q(\mathcal{E}(-4))=h^{q-1}(\mathcal{E}(-3)|_{\mathbb{Q}^3})$
for any $q$.
Note that $h^q(\mathcal{E}(-3)|_{\mathbb{Q}^3})=0$ for $q<3$
and that $h^3(\mathcal{E}(-3)|_{\mathbb{Q}^3})=r-3$.
Thus $h^q(\mathcal{E}(-4))=0$ for $q<4$
and $h^4(\mathcal{E}(-4))=r-3$.

Since $\mathcal{S}^{\vee}\otimes\mathcal{E}(-1)|_{\mathbb{Q}^3}\cong \mathcal{S}^{\vee}
\oplus (\mathcal{S}^{\vee}\otimes \mathcal{S}(-1))\oplus
\mathcal{S}^{\vee}(-1)^{\oplus r-3}$,
it follows from \cite[Lemma 8.2 (1)]{MR4453350}
that $h^1(\mathcal{S}^{\vee}\otimes\mathcal{E}(-1)|_{\mathbb{Q}^3})=1$
and that $h^q(\mathcal{S}^{\vee}\otimes\mathcal{E}(-1)|_{\mathbb{Q}^3})=0$
unless $q=1$.
Since $h^q(\mathcal{E}(-2))=0$ 
for any $q$,
the exact sequence~\eqref{E(-2)twistofSSdual} implies 
that
$h^q(\mathcal{S}^{\mp\vee}\otimes \mathcal{E}(-1))
=h^{q+1}(\mathcal{S}^{\pm\vee}\otimes \mathcal{E}(-2))$ for 
any $q$.
In particular, we have $h^4(\mathcal{S}^{\mp\vee}\otimes \mathcal{E}(-1))=0$
and $h^0(\mathcal{S}^{\pm\vee}\otimes \mathcal{E}(-2))=0$.
Since we have the exact sequence~\eqref{RestrictionForSE(-1)}
and $h^q(\mathcal{S}^{\vee}\otimes\mathcal{E}(-1)|_{\mathbb{Q}^3})=0$
unless $q=1$,
we infer that 
$h^0(\mathcal{S}^{\pm\vee}\otimes\mathcal{E}(-2))=
h^0(\mathcal{S}^{\pm\vee}\otimes\mathcal{E}(-1))$
and that 
$h^{q+1}(\mathcal{S}^{\pm\vee}\otimes\mathcal{E}(-2))=
h^{q+1}(\mathcal{S}^{\pm\vee}\otimes\mathcal{E}(-1))$ for $q\geq 2$.
Hence we have $h^0(\mathcal{S}^{\pm\vee}\otimes\mathcal{E}(-1))=0$
and $h^q(\mathcal{S}^{\mp\vee}\otimes\mathcal{E}(-1))
=h^{q+1}(\mathcal{S}^{\pm\vee}\otimes\mathcal{E}(-1))$ for $q\geq 2$. Consequently 
we see that $h^q(\mathcal{S}^{\mp\vee}\otimes\mathcal{E}(-1))=0$ for $q\geq 2$.
Note also that $h^1(\mathcal{S}^{\pm\vee}\otimes\mathcal{E}(-2))=0$
and that $h^q(\mathcal{S}^{\pm\vee}\otimes\mathcal{E}(-2))=0$ for $q\geq 3$.
Set $a=h^1(\mathcal{S}^{\pm}\otimes \mathcal{E}(-1))$.
Since $h^1(\mathcal{S}^{\vee}\otimes\mathcal{E}(-1)|_{\mathbb{Q}^3})=1$,
we see that $a=0$ or $1$ and that 
$1-a=h^2(\mathcal{S}^{\pm\vee}\otimes \mathcal{E}(-2))
=h^1(\mathcal{S}^{\mp\vee}\otimes \mathcal{E}(-1))$.
Therefore $(h^1(\mathcal{S}^{+\vee}\otimes \mathcal{E}(-1)),
h^1(\mathcal{S}^{-\vee}\otimes \mathcal{E}(-1)))$ is either $(0,1)$ or $(1,0)$.

We see that $\Ext^4(G,\mathcal{E}(-1))=S_5^{\oplus r-3}$,
that $\Ext^q(G,\mathcal{E}(-1))=0$ for $q=3,2$,
and that $\Hom(G,\mathcal{E}(-1))=S_0$.
Lemma~\ref{S2Arilemma} then shows 
that $E_2^{p,4}=0$ unless $p=-4$,
that $E_2^{-4,4}=\mathcal{O}(-1)^{\oplus r-3}$,
that $E_2^{p,q}=0$ for any $p$ if $q=3,2$, 
that $E_2^{p,0}=0$ unless $p=0$, 
and that $E_2^{0,0}=\mathcal{O}$.

If $(h^1(\mathcal{S}^{+\vee}\otimes \mathcal{E}(-1)),
h^1(\mathcal{S}^{-\vee}\otimes \mathcal{E}(-1)))=(0,1)$.
then $\Ext^1(G,\mathcal{E}(-1))=S_2$,
and Lemma~\ref{S2Arilemma} shows that 
that $E_2^{p,1}=0$ unless $p=-1$, and that $E_2^{-1,1}=\mathcal{S}^+(-1)$.

If $(h^1(\mathcal{S}^{+\vee}\otimes \mathcal{E}(-1)),
h^1(\mathcal{S}^{-\vee}\otimes \mathcal{E}(-1)))=(1,0)$.
then $\Ext^1(G,\mathcal{E}(-1))=S_1$,
and Lemma~\ref{S2Arilemma} shows that 
that $E_2^{p,1}=0$ unless $p=-1$, and that $E_2^{-1,1}=\mathcal{S}^-(-1)$.

Hence $\mathcal{E}(-1)$ has a filtration $\mathcal{O}\subset F^1(\mathcal{E}(-1))\subset \mathcal{E}(-1)$
such that $F^1(\mathcal{E}(-1))$ fits in the following exact sequences:
\begin{align*}
0\to \mathcal{O}\to F^1(\mathcal{E}(-1))&\to \mathcal{S}^{\pm}(-1)\to 0;\\
0\to F^1(\mathcal{E}(-1))\to \mathcal{E}(-1)&\to \mathcal{O}(-1)^{\oplus r-3}\to 0.
\end{align*}
Therefore $F^1(\mathcal{E}(-1))\cong  \mathcal{O}\oplus \mathcal{S}^{\pm}(-1)$,
and thus $\mathcal{E}\cong \mathcal{O}(1)\oplus \mathcal{S}^{\pm}\oplus \mathcal{O}^{\oplus r-3}$.
This is Case (3) of Theorem~\ref{Chern2OnQ4}.

\subsection{The case $n\geq 5$}
We will show that this case does not arise. 
Suppose, to the contrary, that there exists a nef vector bundle $\mathcal{E}$
on $\mathbb{Q}^5$ such that
$\mathcal{E}|_{\mathbb{Q}^4}\cong 
\mathcal{O}(1)\oplus \mathcal{S}^{\pm}\oplus \mathcal{O}^{\oplus r-3}$.
We have  
$h^q(\mathcal{S}^{\vee}\otimes \mathcal{E}(-1))
=h^{q+1}(\mathcal{S}^{\vee}\otimes \mathcal{E}(-2))$ for any $q$
by \eqref{qq+1joushouForE(-2)n=5}.
In particular, we have 
$h^0(\mathcal{S}^{\vee}\otimes \mathcal{E}(-2))=0$.
It follows from \cite[Lemma 8.2 (2)]{MR4453350}
that 
$h^q((\mathcal{S}^{+\vee}\oplus\mathcal{S}^{-\vee})\otimes\mathcal{E}(-1)|_{\mathbb{Q}^4})=0$
unless $q=1$
and that 
$h^1((\mathcal{S}^{+\vee}\oplus\mathcal{S}^{-\vee})\otimes\mathcal{E}(-1)|_{\mathbb{Q}^4})=1$.
Hence the exact sequence~\eqref{RestrictionForSE(-1)}
implies that 
$h^0(\mathcal{S}^{\vee}\otimes\mathcal{E}(-2))=
h^0(\mathcal{S}^{\vee}\otimes\mathcal{E}(-1))$
and that 
$h^{q+1}(\mathcal{S}^{\vee}\otimes\mathcal{E}(-2))=
h^{q+1}(\mathcal{S}^{\vee}\otimes\mathcal{E}(-1))$ for $q\geq 2$.
Hence we have $h^0(\mathcal{S}^{\vee}\otimes\mathcal{E}(-1))=0$
and $h^q(\mathcal{S}^{\vee}\otimes\mathcal{E}(-1))=0$ for $q\geq 2$.
Note also that $h^1(\mathcal{S}^{\vee}\otimes\mathcal{E}(-2))=0$.
Hence we have the following exact sequence:
\[
0\to H^1(\mathcal{S}^{\vee}\otimes\mathcal{E}(-1))
\to 
H^1((\mathcal{S}^{+\vee}\oplus\mathcal{S}^{-\vee})\otimes \mathcal{E}(-1)|_{\mathbb{Q}^4})
\to H^2(\mathcal{S}^{\vee}\otimes\mathcal{E}(-2))\to 0.\]
This is, however,  a contradiction, because
$h^1(\mathcal{S}^{\vee}\otimes \mathcal{E}(-1))
=h^{2}(\mathcal{S}^{\vee}\otimes \mathcal{E}(-2))$ and  
$h^1((\mathcal{S}^{+\vee}\oplus\mathcal{S}^{-\vee})\otimes \mathcal{E}(-1)|_{\mathbb{Q}^4})=1$.
Therefore the case $n\geq 5$ does not arise.

\section{The case where $\mathcal{E}|_{\mathbb{Q}^3}$ belongs to Case
(4) of Theorem~\ref{Chern2OnQ3}}

\subsection{The case $n\geq 4$}
Suppose that $\mathcal{E}|_{\mathbb{Q}^{n-1}}$ fits in an exact sequence
\[
0\to \mathcal{O}(-1)
\to \mathcal{O}(1)\oplus 
\mathcal{O}^{\oplus r}\to \mathcal{E}|_{\mathbb{Q}^{n-1}}\to 0.
\]
Then $h^0(\mathcal{E}(-1)|_{\mathbb{Q}^{n-1}})=1$
and $h^q(\mathcal{E}(-1)|_{\mathbb{Q}^{n-1}})=0$ for $q>0$.

We claim here that $h^q(\mathcal{E}(-2))=0$ for any $q$.
If $n\geq 5$, this follows from 
\eqref{firstvanishinggeq5}.
Suppose that $n=4$. 
Then $c_2h^2=4$ and  $c_3h=4$. 
Note that $h^q(\mathcal{E}(-1))=0$ for $q>0$ by \eqref{firstvanishing}.
Hence $h^q(\mathcal{E}(-2))=0$ for $q\geq 2$.
The assumption~\eqref{h^0(E(-2))vanish} together with \eqref{e(-2)RR}
and 
\eqref{c_2squarebiggerC1C3biggerC4}
shows that 
\[-h^1(\mathcal{E}(-2))=\chi(\mathcal{E}(-2))
=\frac{1}{12}(c_2^2-2c_4)\geq -\dfrac{c_4}{12}.\]
Since $c_3h=4$,
we have $c_1c_3=8$.  
Hence $c_4\leq 8$ by \eqref{c_2squarebiggerC1C3biggerC4}.
Therefore $h^1(\mathcal{E}(-2))\leq 2/3$, and thus $h^1(\mathcal{E}(-2))=0$.
Hence the claim holds.

Since $h^0(\mathcal{E}(-1)|_{\mathbb{Q}^{n-1}})=1$,
the claim above implies that $h^0(\mathcal{E}(-1))=1$.
Hence we have an injection $\mathcal{O}(1)\to \mathcal{E}$.
Let $\mathcal{F}$ be its cokernel. Then $\mathcal{F}$ is torsion-free
by \eqref{h^0(E(-2))vanish},
and 
we have the following exact sequence:
\[
0\to \mathcal{O}(1)\to \mathcal{E}\to \mathcal{F}\to 0.
\]
Since $\mathcal{O}_{\mathbb{Q}^{n-1}}(1)\to \mathcal{E}|_{\mathbb{Q}^{n-1}}$ is injective,
$\mathcal{F}|_{\mathbb{Q}^{n-1}}$ fits in the following exact sequences:
\[
0\to 
\mathcal{F}(-1)\to \mathcal{F}\to \mathcal{F}|_{\mathbb{Q}^{n-1}}\to 0;\]
\[
0\to \mathcal{O}(-1)
\to 
\mathcal{O}^{\oplus r}\to \mathcal{F}|_{\mathbb{Q}^{n-1}}\to 0.\]
We will apply to $\mathcal{F}$ 
the Bondal spectral sequence~\eqref{BondalSpectral}.
We have $h^q(\mathcal{F})=0$ for $q>0$,
since $h^q(\mathcal{E})=0$ for $q>0$ by \eqref{firstvanishing}.
Moreover $h^q(\mathcal{F}(-1))=0$ for any $q$,
since $h^q(\mathcal{E}(-1))=0$ for $q>0$ by \eqref{firstvanishing}. 
Thus $h^0(\mathcal{F})=h^0(\mathcal{F}|_{\mathbb{Q}^{n-1}})=r$.
Note that 
$h^q(\mathcal{F}(-k)|_{\mathbb{Q}^{n-1}})=0$ for any $q$
and any $k$ $(1\leq k\leq n-3)$,
that $h^{n-2}(\mathcal{F}(-n+2)|_{\mathbb{Q}^{n-1}})=1$,
and that $h^q(\mathcal{F}(-n+2))=0$ unless $q=n-2$.
Hence we see that
$h^q(\mathcal{F}(-k))=0$ for any $q$
and any $k$ $(2\leq k\leq n-2)$,
that $h^{n-1}(\mathcal{F}(-n+1))=1$,
and that $h^q(\mathcal{F}(-n+1))=0$ unless $q=n-1$. 
Since $\mathcal{F}$ is torsion-free, 
it follows from 
\eqref{SSdual}
that there is an 
exact sequence
\[
0\to \mathcal{S}^{(\pm)\vee}\otimes \mathcal{F}(-1)
\to 
\mathcal{F}(-1)^{\oplus \alpha}
\to 
\mathcal{S}^{(\mp)\vee}\otimes \mathcal{F}
\to 0,
\]
where $\alpha=2^{\lceil n/2\rceil}$.
Hence 
$h^q(\mathcal{S}^{(\mp)\vee}\otimes \mathcal{F})
=h^{q+1}(\mathcal{S}^{(\pm)\vee}\otimes \mathcal{F}(-1))$ for any $q$.
Note that 
$h^q(\mathcal{S}^{(\pm)\vee}\otimes \mathcal{F}|_{\mathbb{Q}^{n-1}})=0$
for any $q$.
Since we have an exact sequence
\[
0\to \mathcal{S}^{(\pm)\vee}\otimes \mathcal{F}(-1)
\to 
\mathcal{S}^{(\pm)\vee}\otimes \mathcal{F}
\to 
(\mathcal{S}^{(\pm)\vee}\otimes \mathcal{F})|_{\mathbb{Q}^3}
\to 0,
\]
we infer that 
$h^{q+1}(\mathcal{S}^{(\pm)\vee}\otimes\mathcal{F}(-1))=
h^{q+1}(\mathcal{S}^{(\pm)\vee}\otimes\mathcal{F})$ for any $q$.
Hence we see that
$h^q(\mathcal{S}^{(\pm)\vee}\otimes \mathcal{F})=0$ for any $q$.
Therefore $\Ext^q(G,\mathcal{F})=0$ unless $q=n-1$ or  $0$,
$\Ext^{n-1}(G,\mathcal{F})=S_n$ if $n$ is odd,
$\Ext^{n-1}(G,\mathcal{F})=S_{n+1}$ if $n$ is even,
and $\Hom(G,\mathcal{F})=S_0^{\oplus r}$.
Hence $E_2^{p,q}=0$ unless $(p.q)=(-n,n-1)$ or $(0,0)$,
$E_2^{-n,n-1}=\mathcal{O}(-1)$, and $E_2^{0,0}=\mathcal{O}^{\oplus r}$
by Lemma~\ref{S2Arilemma}.
Thus we have an exact sequence
\[
0\to \mathcal{O}(-1)\to \mathcal{O}^{\oplus r}\to \mathcal{F}\to 0.
\]
Therefore $\mathcal{E}$ belongs to Case (4) of Theorem~\ref{Chern2OnQ4}.

\section{The case where $\mathcal{E}|_{\mathbb{Q}^3}$ belongs to Case
(5) of Theorem~\ref{Chern2OnQ3}}

\subsection{The case $n=4$} 
Suppose that $\mathcal{E}|_{\mathbb{Q}^3}$ fits in the following exact sequence:
\[0\to \mathcal{O}^{\oplus a}\to\mathcal{S}^{\oplus 2}\oplus\mathcal{O}^{\oplus r-4+a}
\to \mathcal{E}|_{\mathbb{Q}^3}\to 0,\] where $a=0$ or $1$,
and the composite of the injection 
$\mathcal{O}^{\oplus a}\to\mathcal{S}^{\oplus 2}\oplus\mathcal{O}^{\oplus r-4+a}$
and the projection $\mathcal{S}^{\oplus 2}\oplus\mathcal{O}^{\oplus r-4+a}
\to \mathcal{O}^{\oplus r-4+a}$ is zero.
We will apply to $\mathcal{E}(-1)$ 
the Bondal spectral sequence~\eqref{BondalSpectral}.
First note that $h^q(\mathcal{E}(-1)|_{\mathbb{Q}^3})=0$
for any $q$ and that $h^q(\mathcal{E}(-1))=0$ for $q>0$
by \eqref{firstvanishing}.
Hence we see that $h^q(\mathcal{E}(-2))=0$ for 
any $q$ by  \eqref{h^0(E(-2))vanish}.
Thus we infer that $h^0(\mathcal{E}(-1))=0$.
Since $h^q(\mathcal{E}(-2)|_{\mathbb{Q}^3})=0$ for any $q$,
we infer that $h^q(\mathcal{E}(-3))=0$ for any $q$.
Note here that $h^q(\mathcal{E}(-3)|_{\mathbb{Q}^3})=0$
unless $q=2$ or $3$,
and that $h^2(\mathcal{E}(-3)|_{\mathbb{Q}^3})\leq a\leq 1$.
Set $b=h^2(\mathcal{E}(-3)|_{\mathbb{Q}^3})$. Then we see that 
$h^3(\mathcal{E}(-3)|_{\mathbb{Q}^3})=r-4+b$.
Moreover $h^3(\mathcal{E}(-3))=b$ and $h^4(\mathcal{E}(-4))=r-4+b$.

The exact sequence above induces the following exact sequence:
\[
0\to \mathcal{S}^{\vee}(-1)^{\oplus a}\to 
(\mathcal{S}^{\vee}\otimes \mathcal{S}(-1))^{\oplus 2}\oplus
\mathcal{S}^{\vee}(-1)^{\oplus r-4+a}
\to\mathcal{S}^{\vee}\otimes \mathcal{E}(-1)|_{\mathbb{Q}^3}\to 0.
\]
By \cite[Lemma 8.2 (1)]{MR4453350}, we see that 
$h^1(\mathcal{S}^{\vee}\otimes \mathcal{E}(-1)|_{\mathbb{Q}^3})=2$
and $h^q(\mathcal{S}^{\vee}\otimes \mathcal{E}(-1)|_{\mathbb{Q}^3})=0$
unless $q=1$.
Since $h^q(\mathcal{E}(-2))=0$ for any $q$, 
the exact sequence~\eqref{E(-2)twistofSSdual} implies  
that
$h^q(\mathcal{S}^{\mp\vee}\otimes \mathcal{E}(-1))
=h^{q+1}(\mathcal{S}^{\pm\vee}\otimes \mathcal{E}(-2))$ for any $q$.
Since $h^q(\mathcal{S}^{\vee}\otimes\mathcal{E}(-1)|_{\mathbb{Q}^3})=0$
unless $q=1$,
the exact sequence~\eqref{RestrictionForSE(-1)}
implies that 
$h^0(\mathcal{S}^{\pm\vee}\otimes\mathcal{E}(-2))=
h^0(\mathcal{S}^{\pm\vee}\otimes\mathcal{E}(-1))$
and that 
$h^{q+1}(\mathcal{S}^{\pm\vee}\otimes\mathcal{E}(-2))=
h^{q+1}(\mathcal{S}^{\pm\vee}\otimes\mathcal{E}(-1))$ for $q\geq 2$.
Hence we have $h^0(\mathcal{S}^{\pm\vee}\otimes\mathcal{E}(-1))=0$
and $h^q(\mathcal{S}^{\mp\vee}\otimes\mathcal{E}(-1))
=0$ for $q\geq 2$.
Note also that 
$h^1(\mathcal{S}^{\pm\vee}\otimes\mathcal{E}(-2))=0$.
Set $c=h^1(\mathcal{S}^{\pm\vee}\otimes \mathcal{E}(-1))$.
Since $h^1(\mathcal{S}^{\vee}\otimes\mathcal{E}(-1)|_{\mathbb{Q}^3})=2$,
we see that $0\leq c\leq 2$ and that 
$2-c=h^2(\mathcal{S}^{\pm\vee}\otimes \mathcal{E}(-2))
=h^1(\mathcal{S}^{\mp\vee}\otimes \mathcal{E}(-1))$.
Therefore $(h^1(\mathcal{S}^{+\vee}\otimes \mathcal{E}(-1)),
h^1(\mathcal{S}^{-\vee}\otimes \mathcal{E}(-1)))=(0,2)$, $(1,1)$, or $(2,0)$.
We may assume that $(h^1(\mathcal{S}^{+\vee}\otimes \mathcal{E}(-1)),
h^1(\mathcal{S}^{-\vee}\otimes \mathcal{E}(-1)))=(c,2-c)$.

Now we have the following equalities:
\begin{align*}
\Ext^4(G,\mathcal{E}(-1))&=S_5^{\oplus r-4+b};&
\Ext^3(G,\mathcal{E}(-1))&=S_5^{\oplus b};\\
\Ext^1(G,\mathcal{E}(-1))&=S_1^{\oplus c}\oplus S_2^{\oplus 2-c};&
\Ext^q(G,\mathcal{E}(-1))&=0\textrm{ for }q=2,0.
\end{align*}
Lemma~\ref{S2Arilemma} then shows 
that $E_2^{-4,4}=\mathcal{O}(-1)^{\oplus r-4+b}$,
that $E_2^{-4,3}=\mathcal{O}(-1)^{\oplus b}$,
that $E_2^{-1,1}=\mathcal{S}^+(-1)^{\oplus 2-c}\oplus\mathcal{S}^-(-1)^{\oplus c}$,
and that $E_2^{p,q}=0$ unless $(p,q)=(-4,4)$, $(-4,3)$, or $(-1,1)$. 
Hence we infer that $\mathcal{E}(-1)$ fits in the following exact sequence:
\[0\to \mathcal{O}(-1)^{\oplus b}
\to \mathcal{S}^+(-1)^{\oplus 2-c}\oplus\mathcal{S}^-(-1)^{\oplus c}
\to \mathcal{E}(-1)\to \mathcal{O}(-1)^{\oplus r-4+b}\to 0.\]
This yields Case (5) of Theorem~\ref{Chern2OnQ4}. 
More precisely, if $b=0$, then $\mathcal{E}$ is isomorphic to 
$\mathcal{S}^{+\oplus 2-c}\oplus\mathcal{S}^{-\oplus c}
\oplus  \mathcal{O}^{\oplus r-4}$.
If $b=1$, note that, for non-zero elements $s_1, s_2$ of $H^0(\mathcal{S}^{\pm})$,
the intersection $(s_1)_0\cap (s_2)_0$ can be empty if and only if 
$(s_1,s_2) \textrm{ or }(s_2,s_1)\in H^0(\mathcal{S}^{+})\times H^0(\mathcal{S}^{-})$.
Since $\mathcal{E}$ is a vector bundle, this implies that if $b=1$ then $c$ must be equal to $1$.

\subsection{The case $n=5$} 
Suppose that $\mathcal{E}|_{\mathbb{Q}^4}$ fits in the following exact sequence:
\[
0\to \mathcal{O}\to \mathcal{S}^{\pm}\oplus\mathcal{S}^{\pm}\oplus\mathcal{O}^{\oplus r-3}
\to\mathcal{E}|_{\mathbb{Q}^4}\to 0.
\]
We will apply to $\mathcal{E}(-1)$ 
the Bondal spectral sequence~\eqref{BondalSpectral}.
First note that $h^q(\mathcal{E}(-1)|_{\mathbb{Q}^4})=0$
for any $q$ and that $h^q(\mathcal{E}(-2))=0$ for any $q$
by 
\eqref{firstvanishinggeq5}.
Hence we see that $h^q(\mathcal{E}(-1))=0$ for any $q$.
Since $h^q(\mathcal{E}(-2)|_{\mathbb{Q}^4})=0$ for any $q$,
we infer that $h^q(\mathcal{E}(-3))=0$ for any $q$.
Furthermore $h^q(\mathcal{E}(-4))=0$ for any $q$,
since $h^q(\mathcal{E}(-3)|_{\mathbb{Q}^4})=0$ for any $q$.
Note here that $h^q(\mathcal{E}(-4)|_{\mathbb{Q}^4})=0$
unless $q=3$ or $4$,
and that $h^3(\mathcal{E}(-4)|_{\mathbb{Q}^4})\leq 1$.
Set $a=h^3(\mathcal{E}(-4)|_{\mathbb{Q}^4})$. Then we see that 
$h^4(\mathcal{E}(-4)|_{\mathbb{Q}^4})=r-4+a$.
Moreover $h^4(\mathcal{E}(-5))=a$ and $h^5(\mathcal{E}(-5))=r-4+a$.

The exact sequence above induces the following exact sequence:
\[
0\to \mathcal{S}^{\vee}(-1)|_{\mathbb{Q}^4}\to 
(\mathcal{S}^{+\vee}\oplus \mathcal{S}^{-\vee})
\otimes
(\mathcal{S}^{\pm}\otimes \mathcal{S}^{\pm}\oplus
\mathcal{O}^{\oplus r-3})(-1)
\to(\mathcal{S}^{\vee}\otimes \mathcal{E}(-1))|_{\mathbb{Q}^4}\to 0.
\]
By \cite[Lemma 8.2 (2)]{MR4453350}, we see that 
$h^1((\mathcal{S}^{\vee}\otimes \mathcal{E}(-1))|_{\mathbb{Q}^4})=2$
and that $h^q((\mathcal{S}^{\vee}\otimes \mathcal{E}(-1))|_{\mathbb{Q}^4})=0$
unless $q=1$.
By \eqref{qq+1joushouForE(-2)n=5},
we have 
$h^q(\mathcal{S}^{\vee}\otimes \mathcal{E}(-1))
=h^{q+1}(\mathcal{S}^{\vee}\otimes \mathcal{E}(-2))$ for any $q$.
Since $h^q((\mathcal{S}^{\vee}\otimes\mathcal{E}(-1))|_{\mathbb{Q}^4})=0$
unless $q=1$,
the exact sequence~\eqref{RestrictionForSE(-1)} implies 
that 
$h^0(\mathcal{S}^{\vee}\otimes\mathcal{E}(-2))=
h^0(\mathcal{S}^{\vee}\otimes\mathcal{E}(-1))$
and that 
$h^{q+1}(\mathcal{S}^{\vee}\otimes\mathcal{E}(-2))=
h^{q+1}(\mathcal{S}^{\vee}\otimes\mathcal{E}(-1))$ for $q\geq 2$.
Hence we have $h^0(\mathcal{S}^{\vee}\otimes\mathcal{E}(-1))=0$
and $h^q(\mathcal{S}^{\vee}\otimes\mathcal{E}(-1))
=0$ for $q\geq 2$.
Note also that $h^1(\mathcal{S}^{\vee}\otimes\mathcal{E}(-2))=0$.
Since $h^1((\mathcal{S}^{\vee}\otimes\mathcal{E}(-1))|_{\mathbb{Q}^4})=2$
and $h^1(\mathcal{S}^{\vee}\otimes \mathcal{E}(-1))
=h^2(\mathcal{S}^{\vee}\otimes \mathcal{E}(-2))$,
we see that $h^1(\mathcal{S}^{\vee}\otimes \mathcal{E}(-1))=1$.

Now we have the following equalities:
\begin{align*}
\Ext^5(G,\mathcal{E}(-1))&=S_5^{\oplus r-4+a};&
\Ext^4(G,\mathcal{E}(-1))&=S_5^{\oplus a};\\
\Ext^1(G,\mathcal{E}(-1))&=S_1;&
\Ext^q(G,\mathcal{E}(-1))&=0\textrm{ for }q=3,2,0.
\end{align*}
Lemma~\ref{S2Arilemma} then shows 
that $E_2^{-5,5}=\mathcal{O}(-1)^{\oplus r-4+a}$,
that $E_2^{-5,4}=\mathcal{O}(-1)^{\oplus a}$,
that $E_2^{-1,1}=\mathcal{S}(-1)$,
and that $E_2^{p,q}=0$ unless $(p,q)=(-5,5)$, $(-5,4)$, or $(-1,1)$. 
Hence we infer that $\mathcal{E}(-1)$ fits in the following exact sequence:
\[0\to \mathcal{O}(-1)^{\oplus a}
\to \mathcal{S}(-1)
\to \mathcal{E}(-1)\to \mathcal{O}(-1)^{\oplus r-4+a}\to 0.\]
This yields Case (6) of Theorem~\ref{Chern2OnQ4}. 
Note that $c_4(\mathcal{S})=0$ since $c_4(\mathcal{S})h=0$.

\subsection{The case $n=6$} 
Suppose that $\mathcal{E}|_{\mathbb{Q}^5}$
fits in the following exact sequence:
\[
0\to \mathcal{O}\to \mathcal{S}\oplus\mathcal{O}^{\oplus r-3}
\to\mathcal{E}|_{\mathbb{Q}^5}\to 0.
\]
We will apply to $\mathcal{E}(-1)$ 
the Bondal spectral sequence~\eqref{BondalSpectral}.
First note that $h^q(\mathcal{E}(-k)|_{\mathbb{Q}^5})=0$
for any $q$ if $1\leq k\leq 4$ and that $h^q(\mathcal{E}(-k))=0$ for any $q$
if $1\leq k\leq 2$
by \eqref{h^0(E(-2))vanish},
\eqref{firstvanishing},
and \eqref{firstvanishinggeq5}.
Hence we also see that $h^q(\mathcal{E}(-k))=0$ for any $q$
if $3\leq k\leq 5$.
Note here that $h^q(\mathcal{E}(-5)|_{\mathbb{Q}^5})=0$
unless $q=4$ or $5$,
and that $h^4(\mathcal{E}(-5)|_{\mathbb{Q}^5})\leq 1$.
Set $a=h^4(\mathcal{E}(-5)|_{\mathbb{Q}^5})$. Then we see that 
$h^5(\mathcal{E}(-5)|_{\mathbb{Q}^5})=r-4+a$.
Moreover $h^5(\mathcal{E}(-6))=a$ and $h^6(\mathcal{E}(-6))=r-4+a$.

The exact sequence above induces the following exact sequence:
\[
0\to \mathcal{S}^{\vee}(-1)\to 
\mathcal{S}^{\vee}
\otimes
\mathcal{S}(-1)\oplus
\mathcal{S}^{\vee}(-1)^{\oplus r-3}
\to\mathcal{S}^{\vee}\otimes \mathcal{E}(-1)|_{\mathbb{Q}^5}\to 0.
\]
By \cite[Lemma 8.2 (1)]{MR4453350}, we see that 
$h^1((\mathcal{S}^{\vee}\otimes \mathcal{E}(-1))|_{\mathbb{Q}^5})=1$
and that $h^q((\mathcal{S}^{\vee}\otimes \mathcal{E}(-1))|_{\mathbb{Q}^4})=0$
unless $q=1$.
By \eqref{qq+1joushouForE(-2)n=5}, we have 
$h^q(\mathcal{S}^{\mp\vee}\otimes \mathcal{E}(-1))
=h^{q+1}(\mathcal{S}^{\pm\vee}\otimes \mathcal{E}(-2))$ for all $q$.
Since $h^q(\mathcal{S}^{\vee}\otimes\mathcal{E}(-1)|_{\mathbb{Q}^5})=0$
unless $q=1$,
the exact sequence~\eqref{RestrictionForSE(-1)} implies that 
$h^0(\mathcal{S}^{\pm\vee}\otimes\mathcal{E}(-2))=
h^0(\mathcal{S}^{\pm\vee}\otimes\mathcal{E}(-1))$
and that 
$h^{q+1}(\mathcal{S}^{\pm\vee}\otimes\mathcal{E}(-2))=
h^{q+1}(\mathcal{S}^{\pm\vee}\otimes\mathcal{E}(-1))$ for $q\geq 2$.
Hence we have $h^0(\mathcal{S}^{\pm\vee}\otimes\mathcal{E}(-1))=0$
and $h^q(\mathcal{S}^{\mp\vee}\otimes\mathcal{E}(-1))
=0$ for $q\geq 2$.
Note also that $h^1(\mathcal{S}^{\mp\vee}\otimes\mathcal{E}(-2))=0$.
Set $b=h^1(\mathcal{S}^{+\vee}\otimes \mathcal{E}(-1))$.
Since $h^1(\mathcal{S}^{\vee}\otimes\mathcal{E}(-1)|_{\mathbb{Q}^5})=1$,
we see that  $b=0$ or $1$ and that 
$1-b=h^2(\mathcal{S}^{+\vee}\otimes \mathcal{E}(-2))
=h^1(\mathcal{S}^{-\vee}\otimes \mathcal{E}(-1))$.
Hence we see that 
$(h^1(\mathcal{S}^{+\vee}\otimes \mathcal{E}(-1)),
h^1(\mathcal{S}^{-\vee}\otimes \mathcal{E}(-1)))=(0,1)$, or $(1,0)$.

Now we have the following equalities:
\begin{align*}
\Ext^6(G,\mathcal{E}(-1))&=S_7^{\oplus r-4+a};&
\Ext^5(G,\mathcal{E}(-1))&=S_7^{\oplus a};\\
\Ext^1(G,\mathcal{E}(-1))&=S_2\textrm{ if }b=0;&
\Ext^1(G,\mathcal{E}(-1))&=S_1\textrm{ if }b=1;\\
\Ext^q(G,\mathcal{E}(-1))&=0\textrm{ unless }q=6,5,1.&
&
\end{align*}
Lemma~\ref{S2Arilemma} then shows 
that $E_2^{-6,6}=\mathcal{O}(-1)^{\oplus r-4+a}$,
that $E_2^{-6,5}=\mathcal{O}(-1)^{\oplus a}$,
that $E_2^{-1,1}=\mathcal{S}^{\pm}(-1)$,
and that $E_2^{p,q}=0$ unless $(p,q)=(-6,6)$, $(-6,5)$, or $(-1,1)$. 
Hence we infer that $\mathcal{E}(-1)$ fits in the following exact sequence:
\[0\to \mathcal{O}(-1)^{\oplus a}
\to \mathcal{S}^{\pm}(-1)
\to \mathcal{E}(-1)\to \mathcal{O}(-1)^{\oplus r-4+a}\to 0.\]
This yields Case (7) of Theorem~\ref{Chern2OnQ4}. 
Note that $c_4(\mathcal{S}^{\pm})=0$ since $c_4(\mathcal{S}^{\pm})h^2=0$.

\subsection{The case $n\geq 7$}
We will show that this case does not arise. 
Suppose, to the contrary, that there exists a nef vector bundle $\mathcal{E}$
on $\mathbb{Q}^7$ such that
$\mathcal{E}|_{\mathbb{Q}^6}$ fits in the following exact sequence:
\[
0\to \mathcal{O}\to \mathcal{S}^{\pm}\oplus\mathcal{O}^{\oplus r-3}
\to\mathcal{E}|_{\mathbb{Q}^6}\to 0.
\]
We have 
$h^q(\mathcal{S}^{\vee}\otimes \mathcal{E}(-1))
=h^{q+1}(\mathcal{S}^{\vee}\otimes \mathcal{E}(-2))$ for any $q$
by \eqref{qq+1joushouForE(-2)n=5}.
By \cite[Lemma 8.2 (2)]{MR4453350}, we see that 
$h^1((\mathcal{S}^{+\vee}\oplus\mathcal{S}^{-\vee})\otimes \mathcal{E}(-1)|_{\mathbb{Q}^4})=1$
and that 
$h^q((\mathcal{S}^{+\vee}\oplus\mathcal{S}^{-\vee})\otimes\mathcal{E}(-1)|_{\mathbb{Q}^3})=0$
unless $q=1$.
Hence  
the exact sequence~\eqref{RestrictionForSE(-1)} implies that 
$h^0(\mathcal{S}^{\vee}\otimes\mathcal{E}(-2))=
h^0(\mathcal{S}^{\vee}\otimes\mathcal{E}(-1))$
and that 
$h^{q+1}(\mathcal{S}^{\vee}\otimes\mathcal{E}(-2))=
h^{q+1}(\mathcal{S}^{\vee}\otimes\mathcal{E}(-1))$ for $q\geq 2$.
Hence we have $h^0(\mathcal{S}^{\vee}\otimes\mathcal{E}(-1))=0$
and $h^q(\mathcal{S}^{\vee}\otimes\mathcal{E}(-1))=0$ for $q\geq 2$.
Note also that $h^1(\mathcal{S}^{\vee}\otimes\mathcal{E}(-2))=0$.
Hence we have the following exact sequence:
\[
0\to H^1(\mathcal{S}^{\vee}\otimes\mathcal{E}(-1))
\to
H^1((\mathcal{S}^{+\vee}\oplus\mathcal{S}^{-\vee})\otimes \mathcal{E}(-1)|_{\mathbb{Q}^6})
\to H^2(\mathcal{S}^{\vee}\otimes\mathcal{E}(-2))\to 0.\]
This is, however,  a contradiction, because
$h^1(\mathcal{S}^{\vee}\otimes \mathcal{E}(-1))
=h^{2}(\mathcal{S}^{\vee}\otimes \mathcal{E}(-2))$ and  
$h^1((\mathcal{S}^{+\vee}\oplus\mathcal{S}^{-\vee})\otimes \mathcal{E}(-1)|_{\mathbb{Q}^4})=1$.
Therefore the case $n\geq 7$ does not arise.

\section{The case where $\mathcal{E}|_{\mathbb{Q}^3}$ belongs to Case
(6) of Theorem~\ref{Chern2OnQ3}}

\subsection{The case $n=4$} 
Suppose that $\mathcal{E}|_{\mathbb{Q}^3}$ fits in the following exact sequence:
\[
0\to \mathcal{S}(-1)\oplus\mathcal{O}(-1)\to \mathcal{O}^{\oplus r+3}
\to\mathcal{E}|_{\mathbb{Q}^3}\to 0.
\]
We will apply to $\mathcal{E}$ 
the Bondal spectral sequence~\eqref{BondalSpectral}.
First note that $h^q(\mathcal{E}(-1)|_{\mathbb{Q}^3})=0$
for all $q$.
Hence we have 
$h^q(\mathcal{E}(-2))=h^q(\mathcal{E}(-1))$ for any $q$.
Since we have \eqref{h^0(E(-2))vanish} and \eqref{firstvanishing},
this implies that $h^q(\mathcal{E}(-k))=0$ for $k=1,2$ and any $q$.
Thus we see that $h^0(\mathcal{E})=h^0(\mathcal{E}|_{\mathbb{Q}^3})=r+3$.
Note here that $h^2(\mathcal{E}(-2)|_{\mathbb{Q}^3})=1$
and that $h^q(\mathcal{E}(-2)|_{\mathbb{Q}^3})=0$ unless $q=2$.
Hence we see that $h^3(\mathcal{E}(-3))=1$
and that $h^q(\mathcal{E}(-3))=0$ unless $q=3$.

The exact sequence above induces the following exact sequence:
\[
0\to (\mathcal{S}^{\vee}\otimes \mathcal{S}(-1))\oplus\mathcal{S}^{\vee}(-1)
\to \mathcal{S}^{\vee\oplus r+3}
\to\mathcal{S}^{\vee}\otimes \mathcal{E}|_{\mathbb{Q}^3}\to 0.
\]
By \cite[Lemma 8.2 (1)]{MR4453350}, we see that 
$h^0(\mathcal{S}^{\vee}\otimes \mathcal{E}|_{\mathbb{Q}^3})=1$
and $h^q(\mathcal{S}^{\vee}\otimes \mathcal{E}|_{\mathbb{Q}^3})=0$
unless $q=1$.
We have $h^q(\mathcal{S}^{\mp\vee}\otimes \mathcal{E})
=h^{q+1}(\mathcal{S}^{\pm\vee}\otimes \mathcal{E}(-1))$ for any $q$
by \eqref{qq+1joushouForE(-1)}.
In particular, we have $h^0(\mathcal{S}^{\pm\vee}\otimes \mathcal{E}(-1))=0$.
Since $h^q(\mathcal{S}^{\vee}\otimes\mathcal{E}|_{\mathbb{Q}^3})=0$
unless $q=0$,
the exact sequence~\eqref{RestrictionForSE} implies that
$h^{q+1}(\mathcal{S}^{\pm\vee}\otimes\mathcal{E}(-1))=
h^{q+1}(\mathcal{S}^{\pm\vee}\otimes\mathcal{E})$ for $q\geq 1$.
Hence we have $h^q(\mathcal{S}^{\mp\vee}\otimes\mathcal{E})
=0$ for $q\geq 1$.
Set $a=h^0(\mathcal{S}^{+\vee}\otimes \mathcal{E})$.
Since $h^0(\mathcal{S}^{\vee}\otimes\mathcal{E}|_{\mathbb{Q}^3})=1$,
we see that $a=0$ or $1$ and that 
$1-a=h^1(\mathcal{S}^{+\vee}\otimes \mathcal{E}(-1))
=h^0(\mathcal{S}^{-\vee}\otimes \mathcal{E})$.
Therefore $(h^0(\mathcal{S}^{+\vee}\otimes \mathcal{E}),
h^0(\mathcal{S}^{-\vee}\otimes \mathcal{E}))=(0,1)$, or $(1,0)$.

Now we see that $\Ext^3(G,\mathcal{E})=S_5$, that $\Ext^q(G,\mathcal{E})=0$ for $q=2,1$,
and that $\Hom(G,\mathcal{E})$ fits in the following exact sequence:
\[0\to S_0^{\oplus r+3}\to \Hom(G,\mathcal{E})\to S_2\to 0 \quad \textrm{ if } a=0;\]
\[0\to S_0^{\oplus r+3}\to \Hom(G,\mathcal{E})\to S_1\to 0 \quad \textrm{ if } a=1.\]
Lemma~\ref{S2Arilemma} then shows 
that $E_2^{-4,3}=\mathcal{O}(-1)$,
that $E_2^{p,q}=0$ unless $(p,q)=(-4,3)$, or $(0,0)$,
and that $E_2^{0,0}$ fits in the following exact sequence:
\[
0\to \mathcal{S}^{\pm}(-1)\to \mathcal{O}^{\oplus r+3}\to E_2^{0,0}\to 0.
\]
Hence we infer that $\mathcal{E}$ fits in the following exact sequence:
\[0\to \mathcal{S}^{\pm}(-1)\oplus \mathcal{O}(-1)
\to \mathcal{O}^{\oplus r+3}
\to \mathcal{E}\to 0.\]
This is Case (8) of Theorem~\ref{Chern2OnQ4}. 

\subsection{The case $n\geq 5$}
We will show that this case does not arise. 
Suppose, to the contrary, that there exists a nef vector bundle $\mathcal{E}$
on $\mathbb{Q}^5$ such that
$\mathcal{E}|_{\mathbb{Q}^4}$ fits in the following exact sequence:
\[0\to \mathcal{S}^{\pm}(-1)\oplus \mathcal{O}(-1)
\to \mathcal{O}^{\oplus r+3}
\to \mathcal{E}|_{\mathbb{Q}^4}\to 0.\]
Since 
we have 
$h^0(\mathcal{E}(-1)|_{\mathbb{Q}^4})=0$,
it follows from \eqref{h^0(E(-2))vanish}
that $h^0(\mathcal{E}(-1))=0$.
Hence 
we have 
$h^q(\mathcal{S}^{\vee}\otimes \mathcal{E})
=h^{q+1}(\mathcal{S}^{\vee}\otimes \mathcal{E}(-1))$ for any $q$
by \eqref{qq+1joushouForE(-1)}.
In particular, 
$h^0(\mathcal{S}^{\vee}\otimes \mathcal{E}(-1))=0$.
Note that 
$h^q((\mathcal{S}^{+\vee}\oplus\mathcal{S}^{-\vee})\otimes\mathcal{E}|_{\mathbb{Q}^4})=0$
unless $q=0$,
and that 
$h^0((\mathcal{S}^{+\vee}\oplus\mathcal{S}^{-\vee})\otimes \mathcal{E}|_{\mathbb{Q}^4})=1$
by \cite[Lemma 8.2 (2)]{MR4453350}.
Hence the exact sequence~\eqref{RestrictionForSE} implies that 
$h^{q+1}(\mathcal{S}^{\vee}\otimes\mathcal{E}(-1))=
h^{q+1}(\mathcal{S}^{\vee}\otimes\mathcal{E})$ for $q\geq 1$.
Hence we have $h^q(\mathcal{S}^{\vee}\otimes\mathcal{E})=0$ for $q\geq 1$.
Now we have the following exact sequence:
\[
0\to H^0(\mathcal{S}^{\vee}\otimes\mathcal{E})
\to H^0((\mathcal{S}^{+\vee}\oplus\mathcal{S}^{-\vee})\otimes \mathcal{E}|_{\mathbb{Q}^4})
\to H^1(\mathcal{S}^{\vee}\otimes\mathcal{E}(-1))\to 0.\]
This is, however,  a contradiction, because
$h^0(\mathcal{S}^{\vee}\otimes \mathcal{E})
=h^{1}(\mathcal{S}^{\vee}\otimes \mathcal{E}(-1))$ and  
$h^0((\mathcal{S}^{+\vee}\oplus\mathcal{S}^{-\vee})\otimes \mathcal{E}|_{\mathbb{Q}^4})=1$.
Therefore the case $n\geq 5$ does not arise.

\section{The case where $\mathcal{E}|_{\mathbb{Q}^3}$ belongs to Case
(7) of Theorem~\ref{Chern2OnQ3}}

\subsection{The case $n\geq 4$} 
Suppose that $\mathcal{E}|_{\mathbb{Q}^{n-1}}$ fits in an exact sequence
\[
0\to \mathcal{O}(-1)^{\oplus 2}
\to \mathcal{O}^{\oplus r+2}\to \mathcal{E}|_{\mathbb{Q}^{n-1}}\to 0.
\]
Then we see that $h^0(\mathcal{E}|_{\mathbb{Q}^{n-1}})=r+2$,
that $h^q(\mathcal{E}|_{\mathbb{Q}^{n-1}})=0$ for $q>0$,
that $h^q(\mathcal{E}(-k)|_{\mathbb{Q}^{n-1}})=0$ for any $q$
if $1\leq k\leq n-3$,
that $h^{n-2}(\mathcal{E}(-n+2)|_{\mathbb{Q}^{n-1}})=2$,
and that $h^{q}(\mathcal{E}(-n+2)|_{\mathbb{Q}^{n-1}})=0$
unless $q=n-2$.

We will apply to $\mathcal{E}$ 
the Bondal spectral sequence~\eqref{BondalSpectral}.
Since we have \eqref{h^0(E(-2))vanish}, we see that $h^0(\mathcal{E}(-1))=0$.
Hence 
$h^q(\mathcal{E}(-1))=0$ for 
any $q$ by \eqref{firstvanishing}.
Then we see 
that $h^q(\mathcal{E}(-k))=0$ for any $q$
if $1\leq k\leq n-2$,
that $h^0(\mathcal{E})=r+2$,
that $h^q(\mathcal{E})=0$ for $q>0$,
that $h^{n-1}(\mathcal{E}(-n+1))=2$,
and that $h^{q}(\mathcal{E}(-n+1))=0$
unless $q=n-1$.

We have $h^q(\mathcal{S}^{(\mp)\vee}\otimes \mathcal{E})
=h^{q+1}(\mathcal{S}^{(\pm)\vee}\otimes \mathcal{E}(-1))$ for any $q$
by \eqref{qq+1joushouForE(-1)}.
Note that 
$h^q((\mathcal{S}^{(\pm)\vee}\otimes \mathcal{E})|_{\mathbb{Q}^{n-1}})=0$
for any $q$.
The exact sequence~\eqref{RestrictionForSE} then implies that
$h^{q+1}(\mathcal{S}^{(\pm)\vee}\otimes\mathcal{E}(-1))=
h^{q+1}(\mathcal{S}^{(\pm)\vee}\otimes\mathcal{E})$ for any $q$.
Hence we see that
$h^q(\mathcal{S}^{(\pm)\vee}\otimes \mathcal{E})=0$ for any $q$.

Thus we see that
$\Ext^q(G,\mathcal{E})=0$ unless $q=n-1$ or  $0$,
that $\Ext^{n-1}(G,\mathcal{E})=S_n^{\oplus 2}$ if $n$ is odd,
that $\Ext^{n-1}(G,\mathcal{E})=S_{n+1}^{\oplus 2}$ if $n$ is even,
and that $\Hom(G,\mathcal{E})=S_0^{\oplus r+2}$.
Hence $E_2^{p,q}=0$ unless $(p.q)=(-n,n-1)$ or $(0,0)$,
$E_2^{-n,n-1}=\mathcal{O}(-1)^{\oplus 2}$, and $E_2^{0,0}=\mathcal{O}^{\oplus r}$
by Lemma~\ref{S2Arilemma}.
Therefore we have 
the following exact sequence:
\[
0\to \mathcal{O}(-1)^{\oplus 2}\to \mathcal{O}^{\oplus r+2}\to \mathcal{E}\to 0.
\]
This is Case (9) of Theorem~\ref{Chern2OnQ4}.

\section{The case where $\mathcal{E}|_{\mathbb{Q}^3}$ belongs to Case
(8) of Theorem~\ref{Chern2OnQ3}}

\subsection{The case $n\geq 4$} 
Suppose that $\mathcal{E}|_{\mathbb{Q}^{n-1}}$ fits in an exact sequence
\[
0\to \mathcal{O}(-2)
\to \mathcal{O}^{\oplus r+1}\to \mathcal{E}|_{\mathbb{Q}^{n-1}}\to 0.
\]
Then we see that $h^0(\mathcal{E}|_{\mathbb{Q}^{n-1}})=r+1$,
that $h^q(\mathcal{E}|_{\mathbb{Q}^{n-1}})=0$ for $q>0$,
that $h^q(\mathcal{E}(-k)|_{\mathbb{Q}^{n-1}})=0$ for any $q$
if $1\leq k\leq n-4$,
that $h^{n-2}(\mathcal{E}(-n+3)|_{\mathbb{Q}^{n-1}})=1$,
that $h^{q}(\mathcal{E}(-n+3)|_{\mathbb{Q}^{n-1}})=0$
unless $q=n-2$,
that $h^{n-2}(\mathcal{E}(-n+2)|_{\mathbb{Q}^{n-1}})=n+1$,
and that $h^{q}(\mathcal{E}(-n+2)|_{\mathbb{Q}^{n-1}})=0$
unless $q=n-2$.

We will apply to $\mathcal{E}$ 
the Bondal spectral sequence~\eqref{BondalSpectral}.
We have $h^0(\mathcal{E}(-1))=0$ by \eqref{h^0(E(-2))vanish}.
Then it follows from \eqref{firstvanishing}
that $h^0(\mathcal{E})=r+1$,
that $h^q(\mathcal{E})=0$ for $q>0$,
that 
$h^q(\mathcal{E}(-k))=0$ for any $q$
if $1\leq k\leq n-3$,
that $h^{n-1}(\mathcal{E}(-n+2))=1$,
that $h^{q}(\mathcal{E}(-n+2))=0$
unless $q=n-1$,
that $h^{n-1}(\mathcal{E}(-n+1))=n+2$,
and that $h^{q}(\mathcal{E}(-n+1))=0$
unless $q=n-1$.

We have 
$h^q(\mathcal{S}^{(\mp)\vee}\otimes \mathcal{E})
=h^{q+1}(\mathcal{S}^{(\pm)\vee}\otimes \mathcal{E}(-1))$ for any $q$
by \eqref{qq+1joushouForE(-1)}.
Note that 
$h^q((\mathcal{S}^{(\pm)\vee}\otimes \mathcal{E})|_{\mathbb{Q}^{n-1}})=0$
for any $q$, since $n-1\geq 3$.
The exact sequence~\eqref{RestrictionForSE} then implies that 
$h^{q+1}(\mathcal{S}^{(\pm)\vee}\otimes\mathcal{E}(-1))=
h^{q+1}(\mathcal{S}^{(\pm)\vee}\otimes\mathcal{E})$ for any $q$.
Hence we see that
$h^q(\mathcal{S}^{(\pm)\vee}\otimes \mathcal{E})=0$ for any $q$.

Thus $\Ext^q(G,\mathcal{E})=0$ unless $q=n-1$ or  $0$,
$\Hom(G,\mathcal{E})=S_0^{\oplus r+1}$,
and $\Ext^{n-1}(G,\mathcal{E})$ fits in the following exact sequence:
\[0\to S_{n-1}\to \Ext^{n-1}(G,\mathcal{E})\to S_n^{\oplus n+2}\to 0 \quad \textrm{ if } n
\textrm{ is odd};\]
\[0\to S_n\to \Ext^{n-1}(G,\mathcal{E})\to S_{n+1}^{\oplus n+2}\to 0 \quad \textrm{ if } n
\textrm{ is even}.\]
Therefore Lemma~\ref{S2Arilemma} implies 
that $E_2^{p,q}=0$ unless $(p,q)=(-n,n-1)$ $(-n+1,n-1)$ or $(0,0)$,
that $E_2^{0,0}\cong \mathcal{O}^{\oplus r+1}$,
and 
that $E_2^{-n,n-1}$ and $E_2^{-n+1,n-1}$ fit in the following exact sequence:
\begin{equation}\label{(13)noE_2^22noMotonoExSeq}
0\to E_2^{-n,n-1}\to \mathcal{O}(-1)^{\oplus n+2}\to T_{\mathbb{P}^{n+1}}(-2)|_{\mathbb{Q}^n}
\to E_2^{-n+1,n-1}\to 0.
\end{equation}
The Bondal spectral sequence induces the following isomorphisms
and exact sequences:
\[E_2^{-n,n-1}\cong E_n^{-n,n-1};\]
\[E_2^{0,0}\cong E_n^{0,0};\]
\[
0\to E_n^{-n,n-1}\to E_n^{0,0}\to E_{n+1}^{0,0}\to 0;
\]
\[
0\to E_{n+1}^{0,0}\to \mathcal{E}\to E_2^{-n+1,n-1}\to 0.
\]
Note here that $E_2^{-n+1,n-1}|_L$ cannot admit a negative degree quotient 
for any line $L$ in 
$\mathbb{Q}^n$ since $\mathcal{E}$ is nef.
We will show that $E_2^{-n+1,n-1}=0$.
First note that the exact sequence \eqref{(13)noE_2^22noMotonoExSeq}
induces the following exact sequence:
\[
0\to E_2^{-n,n-1}\to \mathcal{O}(-1)^{\oplus n+2}
\oplus \mathcal{O}(-2)
\xrightarrow{p}
\mathcal{O}(-1)^{\oplus n+2}
\to E_2^{-n+1,n-1}\to 0.
\]
Consider the composite of the inclusion 
$\mathcal{O}(-1)^{\oplus n+2}\to  \mathcal{O}(-1)^{\oplus n+2}
\oplus \mathcal{O}(-2)$ and the morphism $p$ above,
and let $\mathcal{O}(-1)^{\oplus a}$ be the cokernel of this composite.
Then we have the following exact sequence:
\[
\mathcal{O}(-2)\xrightarrow{\pi} \mathcal{O}(-1)^{\oplus a}\to E_2^{-n+1,n-1}\to 0.
\]
We claim here that $a=0$. Suppose, to the contrary, that $a>0$.
Since $E_2^{-n+1,n-1}$ cannot be isomorphic to $\mathcal{O}(-1)^{\oplus a}$,
the morphism $\pi$ above
is not zero.
Therefore the composite of $\pi$ and some projection
$\mathcal{O}(-1)^{\oplus a}\to \mathcal{O}(-1)$ is not zero,
whose quotient is of the form $\mathcal{O}_H(-1)$ for some hyperplane 
$H$ in $\mathbb{Q}^n$.
Hence $E_2^{-n+1,n-1}$ admits $\mathcal{O}_H(-1)$ as a quotient. This is a contradiction.
Thus $a=0$ and $E_2^{-n+1,n-1}=0$. Moreover we see that $E_2^{-n,n-1}\cong \mathcal{O}(-2)$.
Therefore $\mathcal{E}$ fits in the following exact sequence:
\[
0\to \mathcal{O}(-2)\to \mathcal{O}^{\oplus r+1}\to \mathcal{E}\to 0.
\]
This is Case (10) of Theorem~\ref{Chern2OnQ4}.

\section{The case where $\mathcal{E}|_{\mathbb{Q}^3}$ belongs to Case
(9) of Theorem~\ref{Chern2OnQ3}}\label{Case(9)OfTheoremChern2OnQ3}

\subsection{The case $n\geq 4$} 
We will show that this case does not arise. 
Suppose, to the contrary, that there exists a nef vector bundle $\mathcal{E}$
on $\mathbb{Q}^4$ such that
$\mathcal{E}|_{\mathbb{Q}^3}$ fits in the following exact sequence:
\[
0\to 
\mathcal{O}(-2)\xrightarrow{\alpha}
\mathcal{O}(-1)^{\oplus 4}
\to \mathcal{O}^{\oplus r+3}
\to\mathcal{E}|_{\mathbb{Q}^3}\to 0.
\]
We see that $h^0(\mathcal{E}|_{\mathbb{Q}^3})=r+3$,
that $h^q(\mathcal{E}|_{\mathbb{Q}^3})=0$ for $q>0$,
that $h^1(\mathcal{E}(-1)|_{\mathbb{Q}^3})=1$,
and that $h^q(\mathcal{E}(-1)|_{\mathbb{Q}^3})=0$ for $q\neq 1$.
Note that $\alpha^{\vee}(-1)$ is surjective
and thus the image of $H^0(\alpha^{\vee}(-1))$ has dimension at least four.
Hence $H^0(\alpha^{\vee}(-1))$ is injective,
and its dual $H^3(\alpha(-2)):H^3(\mathcal{O}(-4))\to H^3(\mathcal{O}(-3)^{\oplus 4})$ is surjective.
Therefore we infer that $h^1(\mathcal{E}(-2)|_{\mathbb{Q}^3})=1$
and that $h^q(\mathcal{E}(-2)|_{\mathbb{Q}^3})=0$ for $q\neq 1$.

We will apply to $\mathcal{E}$ 
the Bondal spectral sequence~\eqref{BondalSpectral}.
Since we have \eqref{h^0(E(-2))vanish} and $h^q(\mathcal{E}(-1))=0$ 
for $q>0$ by \eqref{firstvanishing},
we see that $h^0(\mathcal{E}(-1))=0$,
that  $h^2(\mathcal{E}(-2))=1$,
and that $h^q(\mathcal{E}(-2))=0$ unless $q=2$.
Hence $h^0(\mathcal{E})=r+3$
and $h^q(\mathcal{E})=0$ 
for $q>0$ by \eqref{firstvanishing}.
Moreover we see that 
$h^2(\mathcal{E}(-3))=2$
and that $h^q(\mathcal{E}(-3))=0$ unless $q=2$.

We have 
$h^q(\mathcal{S}^{\mp\vee}\otimes \mathcal{E})
=h^{q+1}(\mathcal{S}^{\pm\vee}\otimes \mathcal{E}(-1))$ for any $q$
by \eqref{qq+1joushouForE(-1)}.
Note that 
$h^q(\mathcal{S}^{\vee}\otimes \mathcal{E}|_{\mathbb{Q}^{3}})=0$
for any $q$.
Hence
$h^{q+1}(\mathcal{S}^{\pm\vee}\otimes\mathcal{E}(-1))=
h^{q+1}(\mathcal{S}^{\pm\vee}\otimes\mathcal{E})$ for any $q$
by \eqref{RestrictionForSE},
and thus
$h^q(\mathcal{S}^{\pm\vee}\otimes \mathcal{E})=0$ for any $q$.

Now we see that $\Hom(G,\mathcal{E})\cong S_0^{\oplus r+3}$, 
that $\Ext^q(G,\mathcal{E})=0$
unless $q=0$ or $2$, and 
that $\Ext^2(G,\mathcal{E})$ fits in the following exact sequence:
\[
0\to S_4\to \Ext^2(G,\mathcal{E})\to S_5^{\oplus 2}\to 0.
\]
Therefore Lemma~\ref{S2Arilemma} shows that $E_2^{p,q}=0$ unless $(p,q)=(-3,2)$ or $(0,0)$,
that $E_2^{0,0}\cong \mathcal{O}^{\oplus r+3}$,
and that $E_2^{-3,2}$ fits in the following exact sequence:
\[
0\to \mathcal{O}(-1)^{\oplus 2}\to 
T_{\mathbb{P}^5}(-2)|_{\mathbb{Q}^4}\to E_2^{-3,2}\to 0.
\]
Hence 
$\mathcal{E}$ fits in the following exact sequence:
\[
0\to \mathcal{O}(-1)^{\oplus 2}\to 
T_{\mathbb{P}^5}(-2)|_{\mathbb{Q}^4}\to \mathcal{O}^{\oplus r+3}\to \mathcal{E}\to 0.
\]
Since $T_{\mathbb{P}^5}(-2)|_{\mathbb{Q}^4}$ fits in 
an exact sequence
\[
0\to \mathcal{O}(-2)\to \mathcal{O}(-1)^{\oplus 6}\to T_{\mathbb{P}^5}(-2)|_{\mathbb{Q}^4}\to 0,
\]
we infer that 
$\mathcal{E}$ fits in the following exact sequence:
\[
0\to \mathcal{O}(-1)^{\oplus 2}\oplus\mathcal{O}(-2)\to 
\mathcal{O}(-1)^{\oplus 6}\to \mathcal{O}^{\oplus r+3}\to \mathcal{E}\to 0,
\]
Hence
$\mathcal{E}$ fits in the following exact sequence:
\[
0\to \mathcal{O}(-2)\to 
\mathcal{O}(-1)^{\oplus 4}\to \mathcal{O}^{\oplus r+3}\to \mathcal{E}\to 0.
\]
This contradicts that $\mathcal{E}$ is a vector bundle on $\mathbb{Q}^4$.
Hence this case does not arise.

\bibliographystyle{alpha}
\newcommand{\noop}[1]{} \newcommand{\noopsort}[1]{}
  \newcommand{\printfirst}[2]{#1} \newcommand{\singleletter}[1]{#1}
  \newcommand{\switchargs}[2]{#2#1}

\end{document}